\newcommand{\mathsym}[1]{{}}
\newcommand{\unicode}[1]{{}}
\newtheorem{theorem}{Theorem}[]
\newtheorem{corollary}[]{Corollary}[]
\newtheorem{prop}[]{Proposition}[]
\theoremstyle{definition}
\newtheorem{remark}[]{Remark}[]
\newtheorem{example}[]{Example}[]
\numberwithin{equation}{section}
\newcommand{\R}{\mathbb{R}}
\newcommand{\Sph}{\mathbb{S}}
\begin{document}

\title[On the icosahedron inequality]{On the icosahedron inequality of L\'aszl\'o Fejes-T\'oth}
\author[\'A. G.Horv\'ath]{\'Akos G.Horv\'ath}
\date{2014 Nov.}

\address{\'A. G.Horv\'ath, Dept. of Geometry, Budapest University of Technology,
Egry J\'ozsef u. 1., Budapest, Hungary, 1111}
\email{ghorvath@math.bme.hu}

\subjclass{52A40, 52A38, 26B15}
\keywords{convex hull, volume inequality.}

\begin{abstract}
In this paper we deal with the problem to find the maximal volume polyhedra with a prescribed property and inscribed in the unit sphere. We generalize those inequality (called by \emph{icosahedron inequality}) of L. Fejes-T\'oth of which an interesting consequence the fact that regular icosahedron has maximal volume in the class of the polyhedra with twelve vertices and inscribed in the unit sphere. We give an upper bound for the volume of such star-shaped (with respect to the origin) triangular polyhedra which we know the maximal edge lengths of its faces, respectively. As a further consequence of the generalized inequality we prove the conjecture which states that the maximal volume polyhedron spanned by the vertices of two regular simplices with common centroid is the cube.
\end{abstract}

\maketitle

\section{Introduction}
To find the maximal volume polyhedra in $\R^3$ with a given number of vertices and inscribed in the unit sphere, was first mentioned in \cite{ftl} in 1964. A systematic investigation of this question was started with the paper \cite{bermanhanes} of Berman and Hanes in 1970, who found a necessary condition for optimal polyhedra, and determined those with $n \leq 8$ vertices. The same problem was examined in \cite{M02}, where the author presented the results of a computer-aided search for optimal polyhedra with $4 \leq n \leq 30$ vertices. Nevertheless, according to our knowledge, this question, which is listed in both research problem books \cite{BMP05} and \cite{croft}, is still open for polyhedra with $n > 8$ vertices apart from the fortunate case of $n=12$ when the solution is the regular icosahedron. In \cite{gholangi} the authors investigated this problem for polytopes in arbitrary dimensions. By generalizing the methods of \cite{bermanhanes}, presented a necessary condition for the optimality of a polytope.
The author found the maximum volume polytopes in $\R^d$, inscribed in the unit sphere $\Sph^{d-1}$, with $n=d+2$ vertices. For $n=d+3$ vertices, found the maximum for $d$ odd, over the family of all polytopes, and for $d$ even, over the family of not cyclic polytopes.

One of the most important tools of $3$-dimensional investigations is the result of L. Fejes-T\'oth on the volume bounds of the polyhedra inscribed in the unit sphere (formula (2) on p. 263 in \cite{ftl}). For triangular polyhedra it can be simplified into another one (see  p.264 in \cite{ftl})  which we call \emph{icosahedron inequality}. The calling motivated by the fact, that this inequality implies the case of $n=12$ points when the unique solution is the icosahedron.

The aim of this paper is to give similar inequalities for such cases when it need to take into consideration certain (other than the number of vertices) prescribed information on the examined class of polytopes inscribed in the unit sphere. In Section 3 we generalize the icosahedron inequality for such triangular bodies which faces has given the lengths of the maximal edges, respectively (Prop. 2, Prop. 3, Theorem 1). Our extracted formula is valid not only for convex polyhedra but star-shaped with respect to origin polyhedra, too. (Theorem 1).
As a further consequence of the generalized inequality we prove the conjecture which states that the maximal volume polyhedron spanned by the vertices of two regular simplices with common centroid is the cube. This conjecture was raised and proved partially in \cite{gho 1} and inspired some other examinations on the volume of the convex hull of simplices \cite{gho 2}.

In Section 4 we consider the general (non-triangular) case and prove some inequalities on it, too. Finally, we give the source file (written by Mathematica 10) of the symbolic and numerous calculations of the proof of Theorem 1 in the last section.

\section{Preliminaries}

Since we use some of the important steps of the proof of these inequalities we collect them in a separate proposition. Let $a(P)$ be the area of a convex $p$-gon $P$ lying in the unit sphere, $\tau(P)$ the (spherical) area of the central projection of $P$ upon the unit sphere, and $v(P)$ the volume of the pyramid of base $P$ and apex $O$ which is the centre of the unit sphere. Let denote $U(\tau(P),p)$ the maximum of $v(P)$ for a given pair of values $p$ and $\tau(P)$.
\begin{prop}[\cite{ftl}] With the above notation we have the following statements on $U(\tau(P),p)$.
\begin{enumerate}
\item For given values of $p$ and $\tau$ the volume $v$ attains its maximum $U(\tau, p)$ if $t$ is a regular $p$-gon.
\item For general $p\geq 3$ we have
\begin{equation}
U(\tau,p)= \frac{p}{3}\cos ^2\frac{\pi}{p}\tan\frac{2\pi-\tau}{2p}\left(1-\cot ^2\frac{\pi}{p}\tan ^2\frac{2\pi-\tau}{2p}\right),
\end{equation}
implying that
\begin{equation}
U(\tau,3)=\frac{1}{4}\tan\frac{2\pi-\tau}{6}\left(1-\frac{1}{3}\tan ^2\frac{2\pi-\tau}{6}\right),
\end{equation}

\item The function $U(\tau, p)$ concave on the domain determined by the inequalities $0<\tau\leq \pi$, $p\geq 3$.

\item If $V$ denotes the volume, $R$ the circumradius of a convex polyhedron having $f$ faces, $v$ vertices and $e$ edges, then
\begin{equation}
V\leq \frac{2e}{3}\cos^2\frac{\pi f}{2e}\cot\frac{\pi v}{2e}\left(1-\cot^2\frac{\pi f}{2e}\cot^2\frac{\pi v}{2e}\right)R^3.
\end{equation}
Equality holds only for regular polyhedra.

\end{enumerate}
\end{prop}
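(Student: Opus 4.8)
The plan is to realize the volume as a sum of face-pyramid volumes and then apply the three preceding statements in sequence. Normalizing to $R=1$ (the factor $R^3$ is restored at the end by homogeneity of the volume), write $V=\sum_{P} v(P)$, where $P$ runs over the $f$ faces of the polyhedron and $v(P)$ is the volume of the pyramid with base $P$ and apex $O$. Denote by $p_P$ the number of edges of $P$ and by $\tau_P$ the spherical area of the central projection of $P$ onto the unit sphere. By the very definition of $U$ (and statement (1), which identifies the extremal polygon), each term satisfies $v(P)\le U(\tau_P,p_P)$, so $V\le\sum_P U(\tau_P,p_P)$.

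Next I would record the three combinatorial identities linking the face data to $f,e,v$. Since the central projections of the faces tile the whole sphere, $\sum_P\tau_P=4\pi$; since every edge lies on exactly two faces, $\sum_P p_P=2e$; and there are $f$ summands. These yield the averages $\bar\tau=4\pi/f$ and $\bar p=2e/f$. Because a polyhedron has $f\ge 4$ we get $\bar\tau\le\pi$, and because $2e=\sum_P p_P\ge 3f$ we get $\bar p\ge 3$, so the averaged pair lies in the region $0<\tau\le\pi$, $p\ge 3$ on which $U$ is concave by statement (3). Applying Jensen's inequality to the concave function $U$ gives
\[
\sum_P U(\tau_P,p_P)\le f\,U\!\left(\frac{1}{f}\sum_P\tau_P,\ \frac{1}{f}\sum_P p_P\right)=f\,U\!\left(\frac{4\pi}{f},\frac{2e}{f}\right).
\]

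It then remains to insert the explicit expression (2) with $\tau=4\pi/f$ and $p=2e/f$ and simplify. Here $\pi/p=\pi f/2e$ directly produces the factor $\cos^2(\pi f/2e)$, while $\frac{2\pi-\tau}{2p}=\frac{\pi(f-2)}{2e}$. Euler's relation $v-e+f=2$ rewrites $f-2=e-v$, hence $\frac{\pi(f-2)}{2e}=\frac{\pi}{2}-\frac{\pi v}{2e}$, so that $\tan\frac{2\pi-\tau}{2p}=\cot\frac{\pi v}{2e}$ and $\cot^2\frac{\pi}{p}\tan^2\frac{2\pi-\tau}{2p}=\cot^2\frac{\pi f}{2e}\cot^2\frac{\pi v}{2e}$. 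The prefactor collapses, $f\cdot\frac{p}{3}=f\cdot\frac{2e/f}{3}=\frac{2e}{3}$, landing exactly on the claimed bound, and multiplying by $R^3$ finishes the proof.

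The step I expect to be delicate is the legitimate use of Jensen's inequality: concavity is only asserted on $0<\tau\le\pi$, $p\ge 3$, whereas an individual face need not satisfy $\tau_P\le\pi$ even though the average does. The main obstacle is therefore to justify passing each face's contribution onto the concave region before averaging, i.e.\ to reduce to configurations whose faces all have $\tau_P\le\pi$ (or to invoke the reduction carried out in \cite{ftl}). Once this is granted, the equality analysis is routine: Jensen forces all pairs $(\tau_P,p_P)$ to coincide and statement (1) forces each face to be regular, and these two conditions together single out the regular polyhedra.
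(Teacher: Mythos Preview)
The paper does not give its own proof of this proposition; it is quoted verbatim from \cite{ftl} as background material, so there is no in-paper argument to compare against. Your derivation of (4) from (1)--(3) --- decomposing $V$ into face-pyramids, bounding each by $U(\tau_P,p_P)$, applying Jensen via (3), and simplifying with Euler's relation to turn $\tan\frac{\pi(f-2)}{2e}$ into $\cot\frac{\pi v}{2e}$ --- is exactly the route taken in \cite{ftl}, so in substance you have reconstructed Fejes-T\'oth's proof of the final step.

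Two remarks. First, you have only addressed item (4); items (1)--(3) are separate statements (the regular $p$-gon is extremal for fixed $\tau$; the closed formula; joint concavity in $(\tau,p)$) with their own proofs in \cite{ftl}, and you have taken them as given. That is fine if the intent was to show how (4) follows from the rest, but it is not a proof of the full proposition. Second, the domain issue you flag is genuine and not a mere technicality: for an inscribed tetrahedron that is far from regular, one face can project to spherical area well above $\pi$, so the pair $(\tau_P,p_P)$ need not lie in the stated concavity region. Fejes-T\'oth handles this in \cite{ftl} by showing that $U(\tau,p)$, suitably extended, remains concave (indeed decreasing) for $\tau>\pi$ as well, so Jensen still applies; your write-up should either cite that extension or supply the easy monotonicity/concavity check for $\tau\in(\pi,2\pi)$ rather than leave it as an acknowledged gap.
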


\begin{remark} A polyhedron with a given number $n$ of vertices is always the limiting figure of a triangular polyhedron with $n$ vertices, hence, introducing the notation
$$
\omega_n=\frac{n}{n-2}\frac{\pi}{6}
$$
we have the following inequality
\begin{equation}
V\leq \frac{1}{6}(n-2)\cot \omega_n(3-\cot^2\omega _n)R^3.
\end{equation}
Equality holds in (2.4) only for the regular tetrahedron, octahedron and icosahedron ($n=4,6,12$).
\end{remark}

\section{Inequalities on the volume of a facial tetrahedron}

Our first proposition is a rewriting of (1) of Proposition 1 in a more general form when $p=3$. If $A,B,C$ are three points on the unit sphere we can say two triangles with these vertices, one of the corresponding spherical triangle and the second one the rectilineal triangle with these vertices, respectively. Both of them are denoted by $ABC$. The angles of the rectilineal triangle are the half of the angles between those radius of the circumscribed circle which connect the center $K$ of the rectilineal triangle $ABC$ to the vertices $A,B,C$. Since $K$ is also the foot of the altitude of the tetrahedron with base $ABC$ and apex $O$, hence the angles $\alpha_A$, $\alpha_B$ and $\alpha_C$ of the rectilineal triangle $ABC$, play an important role in our investigations, we refer to them as the \emph{central angle} of the spherical edges $BC$, $AC$ and $AB$, respectively. We call the tetrahedron $ABCO$ the \emph{facial tetrahedron} with base $ABC$ and apex $O$.

\begin{prop}
Let $ABC$ be a triangle inscribed into the unit sphere. Then there is an isosceles triangle $A'B'C'$ inscribed into the unit sphere with the following properties:
\begin{itemize}
\item the greatest central angles and also the spherical areas of the two triangles are equal to each other, respectively;
\item the volume of the facial tetrahedron with base $A'B'C'$  is greater than or equal to the volume of the facial tetrahedron with base $ABC$.
\end{itemize}
\end{prop}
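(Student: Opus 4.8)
The plan is to reduce the statement to a one–parameter variational comparison, after making both the volume and the spherical area completely explicit through the circumcenter $K$ of the rectilinear triangle. Write $r=\sin\rho$ for the circumradius of $ABC$, so that the apex distance is $OK=\cos\rho$, and let $\alpha_A\ge\alpha_B\ge\alpha_C$ be the central angles, with $\alpha_A+\alpha_B+\alpha_C=\pi$. Splitting the base along the radii $KA,KB,KC$ and the spherical triangle along the meridian arcs $KA,KB,KC$ gives the parallel decompositions
\[
v=\tfrac16\sin^2\rho\cos\rho\,(\sin2\alpha_A+\sin2\alpha_B+\sin2\alpha_C),\qquad
\tau=S(2\alpha_A)+S(2\alpha_B)+S(2\alpha_C),
\]
where $S(\phi)$ is the area of the isosceles spherical triangle with apex $K$, equal legs $\rho$ and apex angle $\phi$. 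A short computation yields the clean expression
\[
S(\phi)=2\arctan\frac{k\sin\phi}{1+k\cos\phi},\qquad k=\tan^2(\rho/2),
\]
whence $S''(\phi)=-\,2k(1-k^2)\sin\phi\,(1+k^2+2k\cos\phi)^{-2}<0$ on $(0,\pi)$, since $\rho<\pi/2$ forces $k<1$; similarly $\partial S/\partial\rho>0$, expressing that a larger triangle has larger spherical area.

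I would then define $A'B'C'$ by keeping $\alpha_A'=\alpha_A$ and setting $\alpha_B'=\alpha_C'=(\pi-\alpha_A)/2$, and by choosing its angular circumradius $\rho'$ so that $\tau(A'B'C')=\tau(ABC)$; this $\rho'$ exists and is unique because $\tau$ is strictly increasing in $\rho$. Since $\alpha_A\ge\pi/3$ (it is the largest of three angles summing to $\pi$), one has $\alpha_A\ge(\pi-\alpha_A)/2$, so $\alpha_A$ remains the greatest central angle; thus the first bullet holds by construction, and it only remains to prove the volume inequality $v(A'B'C')\ge v(ABC)$.

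For the volume comparison I fix $\alpha_A$ and put $\delta=\alpha_B-\alpha_C\ge0$, so the admissible triangles with this $\alpha_A$ form a two–parameter family indexed by $(\delta,\rho)$, with $A'B'C'$ at $\delta=0$. Using $\sin2\alpha_B+\sin2\alpha_C=2\sin\alpha_A\cos\delta$ gives $v=\tfrac16\sin^2\rho\cos\rho\,(\sin2\alpha_A+2\sin\alpha_A\cos\delta)$, so at \emph{fixed} $\rho$ both $v$ (through $\cos\delta$) and $\tau$ (through the concavity of $S$) are strictly maximized at $\delta=0$. Imposing $\tau(\delta,\rho)=\tau_0$ and regarding $\rho=\rho(\delta)$, I set $V(\delta)=v(\delta,\rho(\delta))$ and aim to show $V$ is maximized at $\delta=0$. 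Both $v$ and $\tau$ are even in $\delta$, so $\delta=0$ is automatically a critical point and $\rho'(0)=0$; differentiating the constraint gives
\[
V'(\delta)=\frac{v_\delta\,\tau_\rho-v_\rho\,\tau_\delta}{\tau_\rho},
\]
and, since $\tau_\rho>0$, everything reduces to the sign of $\Phi:=v_\delta\tau_\rho-v_\rho\tau_\delta$.

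The hard part will be showing $\Phi\le0$ for all $\delta>0$ on the feasible range. For $\delta>0$ one has $v_\delta<0$ and, by concavity of $S$, $\tau_\delta<0$, while $\tau_\rho>0$; the two terms of $\Phi$ therefore cooperate exactly when $\partial_\rho(\sin^2\rho\cos\rho)<0$, i.e. for large circumradius, but compete when $\rho$ is small, where $v_\rho>0$. In that regime the inequality is not automatic and must be established quantitatively, using the explicit $S,\,S'$ and $\partial S/\partial\rho$ above to reduce $\Phi\le0$ to a single trigonometric inequality in $\alpha_A$ and $k$. I expect this reduction — together with checking the degenerate boundary cases $\alpha_C\to0$ and confirming that the constraint stays solvable with $\rho<\pi/2$ — to be the only genuinely technical point, and the one most naturally discharged by the symbolic computation the paper defers to Mathematica. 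Once $\Phi\le0$ is in hand, $V$ is nonincreasing in $|\delta|$, so $v(ABC)=V(\delta_0)\le V(0)=v(A'B'C')$, with equality only in the isosceles case.
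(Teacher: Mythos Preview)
Your setup is sound and parallels the paper's own variational viewpoint: fix the largest central angle, vary the other two subject to $\tau=\mathrm{const}$, and show the isosceles configuration is optimal. However, you stop at exactly the crux. You reduce everything to the sign of $\Phi=v_\delta\tau_\rho-v_\rho\tau_\delta$ and then defer this to ``symbolic computation the paper defers to Mathematica.'' That is a genuine gap: the paper uses no computer assistance for this proposition, and your monotonicity claim $V'(\delta)\le 0$ is left unproved precisely in the regime (small $\rho$, where $v_\rho>0$) you yourself flag as delicate.

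The paper closes this gap by a different and lighter argument. It splits into two cases. When the circumcenter $K$ lies inside $ABC$, it exploits the concavity of the sub-triangle area $\Delta(\alpha,\beta)=\tfrac12\sin2\alpha\,(1-\cot^2\alpha\cot^2\beta)$ together with the convexity of $\alpha\mapsto\arctan(m\tan\alpha)$; the first shows the symmetrized base has larger Euclidean area, the second shows that matching $\tau$ forces $m'\ge m$, and the volume inequality follows from base~$\times$~height. When $K$ lies outside (the obtuse case), the paper runs a Lagrange-multiplier computation with constraints $\tau=\mathrm{const}$ and $\alpha_A+\alpha_B=\mathrm{const}$ and reduces the stationarity condition to
\[
\frac{1+\tan^2\alpha_A}{1+m^2\tan^2\alpha_A}=\frac{1+\tan^2\alpha_B}{1+m^2\tan^2\alpha_B},
\]
which forces $\alpha_A=\alpha_B$; compactness then places the maximum at the isosceles triangle. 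Note the key point: the paper does \emph{not} try to prove global monotonicity in $\delta$ --- it only shows $\delta=0$ is the unique interior critical point and invokes compactness. If you keep your framework, this is the cleaner route: instead of bounding the sign of $\Phi$, show $\Phi(\delta)=0\Rightarrow\delta=0$ (your explicit $S'(\phi)=\dfrac{2k(k+\cos\phi)}{1+k^2+2k\cos\phi}$ makes this tractable), then appeal to compactness and the vanishing of $V$ on the degenerate boundary $\alpha_C\to 0$.
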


\begin{figure}[ht]
\includegraphics[scale=0.8]{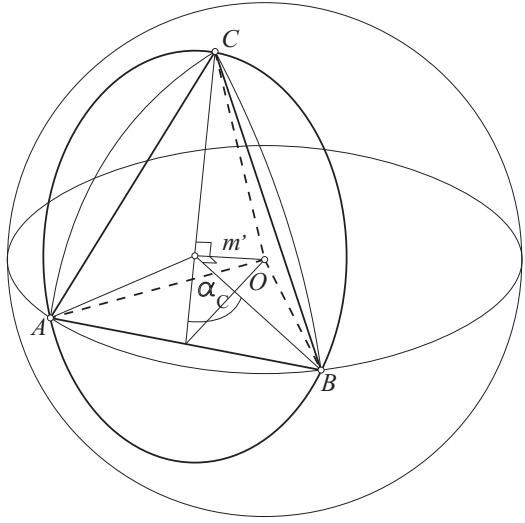}
\caption[]{Facial, rectilineal and spherical simplices, respectively.}
\end{figure}

\begin{proof}
Assume first that the triangle $ABC$ contains the centre $K$ of its circumscribed circle. Let denote by $K'$ the central projection of $K$ onto the unit sphere. The angles $2\alpha_A$ and $\beta_A$ the spherical angles of the triangle $K'BC$ at $K'$ and $B$ (or $C$), respectively. Then the area of the triangle
$KBC$ is equal to
$$
a(KBC)=\Delta(\alpha_A,\beta_A)=\frac{1}{2}\sin 2\alpha_A\sin ^2 K'OB\angle=\frac{1}{2}\sin 2\alpha_A\left(1-\cot^2\alpha_A\cot^2\beta_A\right).
$$
On the domain
$$
0\leq \alpha \leq \frac{\pi}{2}, \quad 0\leq \beta \leq \frac{\pi}{2}, \quad \alpha+\beta \geq \frac{\pi}{2}
$$
it is a concave function of two variables (see p.267 in \cite{ftl}). Hence
$$
a(ABC)=\Delta(\alpha_A,\beta_A)+\Delta(\alpha_B,\beta_B)+\Delta(\alpha_C,\beta_C)\leq
$$
$$
\leq 2\Delta\left(\frac{\alpha_A+\alpha_B}{2},\frac{\beta_A+\beta_B}{2}\right)+\Delta(\alpha_C,\beta_C)=a(A'B'C'),
$$
where the value on the right hand side of the inequality above is the area of the isosceles triangle $A'B'C'$. (We note that the central projections upon the sphere of the two triangles have the same spherical excess $a(ABC)=a(A'B'C')=2(\beta_A+\beta_B+\beta_C)-\pi$.)

Compare now the altitudes $m$ and $m'$ of the pyramids based on the two triangles, respectively. The spherical area of the first triangle is
$$
\tau=2(\beta_A+\beta_B+\beta_C)-\pi=2\pi+\left(2(\beta_A+\beta_B+\beta_C)-3\pi\right)=2\pi+
$$
$$
+2\left(\tan^{-1}\left(\tan\left(\beta_A-\frac{\pi}{2}\right)\right)+ \tan^{-1}\left(\tan\left(\beta_B-\frac{\pi}{2}\right)\right)+\right.
$$
$$
\left.+\tan^{-1}\left(\tan\left(\beta_C-\frac{\pi}{2}\right)\right)\right)=2\pi-2\left(\tan^{-1}\left(m\tan\alpha_A\right)+\right.
$$
$$
\left.+\tan^{-1}\left(m\tan\alpha_B\right)+\tan^{-1}\left(m\tan\alpha_C\right)\right).
$$
By the convexity (see e.g. p. 229 in \cite{ftl}) of $\tan^{-1}\left(m\tan\alpha_A\right)$ we get that
$$
\tau\leq 2\pi-2\left(2\tan^{-1}\left(m\tan\frac{\alpha_A+\alpha_B}{2}\right)+\tan^{-1}\left(m\tan\alpha_C\right)\right).
$$
On the other hand by $m'$ we have
$$
\tau=2\pi-2\left(2\tan^{-1}\left(m'\tan\frac{\alpha_A+\alpha_B}{2}\right)+\tan^{-1}\left(m'\tan\alpha_C\right)\right)
$$
implying that
$$
\left(2\tan^{-1}\left(m\tan\frac{\alpha_A+\alpha_B}{2}\right)+\tan^{-1}\left(m\tan\alpha_C\right)\right)\leq
$$
$$
\leq \left(2\tan^{-1}\left(m'\tan\frac{\alpha_A+\alpha_B}{2}\right)+\tan^{-1}\left(m'\tan\alpha_C\right)\right)
$$
from which follows that $m'\geq m$.

Second assume that the angle at $C$ is obtuse. Then $\alpha_A+\alpha_B=\alpha_C<\pi/2$ and we have
$$
\tau=2\left(\tan^{-1}\left(m\tan\left(\alpha_A+\alpha_B\right)\right)-\tan^{-1}\left(m\tan\alpha_A\right)-\tan^{-1}\left(m\tan\alpha_B\right)\right).
$$
On the other hand
$$
a(ABC)=\frac{1-m^2}{2}\left(\sin 2\alpha_A+\sin 2\alpha_B-\sin 2\alpha_C\right)
$$
and the volume in question is
\begin{equation}
v(\alpha_A,\alpha_B)=\frac{m(1-m^2)}{6}\left(\sin2\alpha_A+\sin2\alpha_B-\sin 2(\alpha_A+\alpha_B)\right).
\end{equation}
We consider the maximum of $v(\alpha_A,\alpha_B)$ under the conditions $0\leq \alpha_A,\alpha_B\leq \pi/2$,
$$
0=-\frac{\tau}{2}+\left(\tan^{-1}\left(m\tan\left(\alpha_A+\alpha_B\right)\right)- \tan^{-1}\left(m\tan\alpha_A\right)-\tan^{-1}\left(m\tan\alpha_B\right)\right),
$$
and
$$
0=\alpha_A+\alpha_B-\mathrm{const}.
$$
with respect to the unknown values $\alpha_A,\alpha_B$ and $m$. Using Lagrange's method we get two equations
$$
\mu=\frac{m(1-m^2)}{6}\left(\cos 2\alpha_A-\cos 2(\alpha_A+\alpha_B)\right)+\frac{\lambda m(1-m^2)\left(\tan^2\left(\alpha_A+\alpha_B\right)-\tan^2\alpha_A\right) }{\left(1+m^2\tan^2\left(\alpha_A+\alpha_B\right)\right)\left(1+m^2\tan^2\alpha_A\right)}
$$
$$
\mu=\frac{m(1-m^2)}{6}\left(\cos 2\alpha_B-\cos 2(\alpha_A+\alpha_B)\right)+\frac{\lambda m(1-m^2)\left(\tan^2\left(\alpha_A+\alpha_B\right)-\tan^2\alpha_B\right) }{\left(1+m^2\tan^2\left(\alpha_A+\alpha_B\right)\right)\left(1+m^2\tan^2\alpha_B\right)}
$$
which are equivalent to the equations
$$
\frac{\mu}{m(1-m^2)}=\frac{1}{3}+\frac{\lambda\left(1+\tan^2\left(\alpha_A+\alpha_B\right)\right)\left(1+\tan^2\alpha_A\right) }{\left(1+m^2\tan^2\left(\alpha_A+\alpha_B\right)\right)\left(1+m^2\tan^2\alpha_A\right)}
$$
$$
\frac{\mu}{m(1-m^2)}=\frac{1}{3}+\frac{\lambda\left(1+\tan^2\left(\alpha_A+\alpha_B\right)\right)\left(1+\tan^2\alpha_B\right) }{\left(1+m^2\tan^2\left(\alpha_A+\alpha_B\right)\right)\left(1+m^2\tan^2\alpha_B\right)}
$$
because of the equality
$$
\frac{\tan^2\left(\alpha_A+\alpha_B\right)-\tan^2\alpha_A}{\left(1+\tan^2\left(\alpha_A+\alpha_B\right)\right)\left(1+\tan^2\alpha_A\right)}= \cos^2\alpha_A-\cos^2(\alpha_A+\alpha_B)=\frac{\cos 2\alpha_A-\cos 2(\alpha_A+\alpha_B)}{2}.
$$
These conditions turn out to be equivalent to
$$
\frac{\left(1+\tan^2\alpha_A\right)}{\left(1+m^2\tan^2\alpha_A\right)}=\frac{\left(1+\tan^2\alpha_B\right) }{\left(1+m^2\tan^2\alpha_B\right)}
$$
which cannot be satisfied unless $\alpha_A=\alpha_B$. Hence if the triangle is not an isosceles one it is not a local extremum of our problem, on the other hand by compactness it has at least one local maximum proving our statement.
\end{proof}

\begin{remark}
We can compare the formulas of Proposition 2
$$
V\leq\frac{m'(1-m'^2)}{6}\left(2\sin \alpha_C-\sin 2(\alpha_C)\right)=\frac{m'(1-m'^2)}{3}\sin \alpha_C(1-\cos \alpha_C)
$$
and
$$
V\leq \frac{m'(1-m'^2)}{6}\left(2\sin (\pi -\widetilde{\alpha}_C)+\sin 2(\widetilde{\alpha}_C)\right)=\frac{m'(1-m'^2)}{3}\sin \widetilde{\alpha}_C(1+\cos \widetilde{\alpha}_C)
$$
on $\alpha_C$ and $\widetilde{\alpha_C}$. In both cases we assumed that $\alpha_C$ and $\widetilde{\alpha}_C$ are in the interval $[0,\pi/2]$, respectively. Using the equality $\alpha_C=\pi-\widetilde{\alpha}_C$ the above formulas simplify to the following common form
\begin{equation}
V\leq\frac{m'(1-m'^2)}{3}\sin \alpha_C(1-\cos \alpha_C)=:v\left(m',\alpha_C\right) \quad \mbox{where} \quad 0<\alpha<\pi.
\end{equation}
In the case when $AC=BC$ we saw that
$$
\tau=2\left(\tan^{-1}\left(m'\tan\alpha_C\right)-2\tan^{-1}\left(m'\tan\frac{\alpha_C}{2}\right)\right)
$$
and
$$
\tau=2\pi-2\left(2\tan^{-1}\left(m'\tan\frac{\pi-\widetilde{\alpha_C}}{2}\right)+\tan^{-1}\left(m'\tan\widetilde{\alpha_C}\right)\right),
$$
respectively. These equalities can be considered in the following common form
\begin{equation}
\tan\frac{\tau}{2}=\tan\left(\tan^{-1}\left(m'\tan\alpha_C\right)-2\tan^{-1}\left(m'\tan\frac{\alpha_C}{2}\right)\right),
\end{equation}
where $0<\alpha_C<\pi$. In the case when $\pi/2<\alpha_C$ we have $\tan\left(\tau/2\right)<0$ and $\tau/2=\pi+\tan^{-1}\left(\tan\left(\tau/2\right)\right)$.
\end{remark}

\begin{corollary}
The upper bound function for fixed $\tau$ with the parameters $|AB|$,$\alpha_C$ is
\begin{equation}
v(|AB|,\alpha_C):=\frac{|AB|^2}{12}\frac{\sqrt{\sin^2\alpha_C-\frac{|AB|^2}{4}}}{1+\cos\alpha_C},
\end{equation}
and using the equality $|AB|=2\sin \frac{AB}{2}$ it is of the form
\begin{equation}
v(AB, \alpha_C):=\frac{\sin^2\frac{AB}{2}}{3}\frac{\sqrt{\sin^2\alpha_C-\sin^2\frac{AB}{2}}}{1+\cos\alpha_C}.
\end{equation}
If $AB$ is given the maximal volume of the possible facial tetrahedra attained at the isosceles triangle with parameter value $\alpha_C=\cos^{-1}\left(\frac{|AB|^2}{4}-1\right)=\cos^{-1}\left(-\cos^2\frac{AB}{2}\right)$. The formula is
$$
v\left(|AB|,\cos^{-1}\left(\frac{|AB|^2}{4}-1\right)\right)=\frac{|AB|}{6}\sqrt{\left(1-\frac{|AB|^2}{4}\right)}= \frac{1}{6}\sin AB.
$$
\end{corollary}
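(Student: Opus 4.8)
The plan is to recognise that the corollary is essentially a reparametrisation of the bound (3.2) from the variables $(m',\alpha_C)$ to the variables $(|AB|,\alpha_C)$, followed by an elementary single-variable optimisation. First I would record the one geometric identity that does all the work. Writing $m'$ for the distance from the centre $O$ to the plane of the base triangle and $r=\sqrt{1-m'^2}$ for its circumradius (the two are linked by $m'^2+r^2=1$ because the vertices lie on the unit sphere), the definition of the central angle gives $\angle A'K'B'=2\alpha_C$ at the circumcentre, so the chord satisfies $|AB|=2r\sin\alpha_C=2\sqrt{1-m'^2}\,\sin\alpha_C$. Squaring and rearranging yields the two substitutions I will need, namely $1-m'^2=\tfrac{|AB|^2}{4\sin^2\alpha_C}$ and $m'\sin\alpha_C=\sqrt{\sin^2\alpha_C-\tfrac{|AB|^2}{4}}$, where the sign of the root is fixed by $m',\sin\alpha_C\ge 0$.

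Second, I would substitute these into (3.2). The product $m'(1-m'^2)$ becomes $\tfrac{|AB|^2}{4\sin^3\alpha_C}\sqrt{\sin^2\alpha_C-|AB|^2/4}$, and after multiplying by the factor $\sin\alpha_C(1-\cos\alpha_C)$ the only simplification required is the identity $\tfrac{1-\cos\alpha_C}{\sin^2\alpha_C}=\tfrac{1}{1+\cos\alpha_C}$ (equivalently $\sin^3\alpha/(1+\cos\alpha)=\sin\alpha(1-\cos\alpha)$). This delivers (3.5) at once, and (3.6) follows by inserting $|AB|=2\sin\tfrac{AB}{2}$, i.e. $|AB|^2/4=\sin^2\tfrac{AB}{2}$.

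Third, for the maximisation I would fix $|AB|$, set $c=|AB|^2/4$, and maximise $\Phi(\alpha_C)=\sqrt{\sin^2\alpha_C-c}/(1+\cos\alpha_C)$ over the admissible range $\sin^2\alpha_C\ge c$, $0<\alpha_C<\pi$ (this inequality is exactly the condition $m'\in[0,1]$, so the family really does consist of facial tetrahedra with the same longest edge). It is cleanest to maximise $\Phi^2$ after the substitution $x=\cos\alpha_C$, which reduces the problem to $\Psi(x)=(1-x^2-c)/(1+x)^2$; a one-line differentiation gives $\Psi'(x)=2(c-1-x)/(1+x)^3$, so the unique interior critical point is $x=c-1$, and the sign change of $\Psi'$ confirms it is the maximum. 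Hence $\cos\alpha_C=\tfrac{|AB|^2}{4}-1=-\cos^2\tfrac{AB}{2}$, the claimed optimal value. Finally, evaluating $\Psi(c-1)=(1-c)/c$ and feeding it back gives the maximal volume $\tfrac{|AB|^2}{12}\sqrt{(1-c)/c}=\tfrac{|AB|}{6}\sqrt{1-|AB|^2/4}$, which collapses to $\tfrac16\sin AB$ via the double-angle formula.

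The computations are all routine; the only genuine step is the geometric identity in the first paragraph, and the point demanding a little care is checking that the critical point $x=c-1$ lies in the admissible range and that the root stays real there. Since $0<|AB|<2$ forces $0<c<1$, we get $-1<c-1<0$, so $\alpha_C\in(\pi/2,\pi)$ (consistent with $\alpha_C$ being the greatest central angle of an obtuse-type configuration), and at the optimum $\sin^2\alpha_C-c=\Psi(c-1)(1+x)^2=c(1-c)>0$, so the bound is indeed attained by a genuine isosceles triangle. I expect that confirming this reparametrised family is exhausted by the admissible $\alpha_C$ — rather than any hard estimate — will be the main thing to get right.
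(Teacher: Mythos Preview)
Your proposal is correct and follows essentially the same route as the paper: both use the identity $|AB|=2\sqrt{1-m'^2}\sin\alpha_C$ to rewrite (3.2) as (3.4)--(3.5), and then carry out a one-variable optimisation in $\alpha_C$ with $|AB|$ fixed. Your substitution $x=\cos\alpha_C$ and maximisation of $\Phi^2$ is a cosmetic variant of the paper's direct differentiation of $v(|AB|,\alpha_C)$ in $\alpha_C$, leading to the same critical value $\cos\alpha_C=|AB|^2/4-1$ and the same evaluation.
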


\begin{proof}
Assume that the value of the length of $AB$ is given. Then by Proposition 2 for fixed $\tau$ the maximal value of the volume $V$ can be attained only for an isosceles triangle and the upper bound function gives this maximal volume.  Using the equality
$$
\sin \alpha_C=\frac{|AB|}{2\sqrt{1-m'^2}}
$$
we get that
$$
v\left(m',\alpha_C\right)=\frac{m'(1-m'^2)}{3}\sin \alpha_C(1-\cos \alpha_C)=\frac{|AB|^2}{12}\frac{\sqrt{\sin^2\alpha_C-\frac{|AB|^2}{4}}}{1+\cos\alpha_C}=v(|AB|,\alpha_C),
$$
where the possible values of $\alpha_C$ can be got from the equality $\sin^2\alpha_C\geq  |AB|^2/4$. The derivative of $v(|AB|,\alpha_C)=v(y,x)$ is
$$
v'(y,x)=\frac{y^2 \sin (x) \sqrt{\sin ^2(x)-\frac{y^2}{4}}}{12 (\cos (x)+1)^2}+\frac{y^2 \sin (x) \cos (x)}{12 (\cos (x)+1) \sqrt{\sin ^2(x)-\frac{y^2}{4}}}
$$
hence we have
$$
v'(|AB|,\alpha_C)=\frac{|AB|^2\sin\alpha_C\left(\cos\alpha_C+1-\frac{|AB|^2}{4}\right)}{12(1+\cos\alpha_C)^2\sqrt{\sin^2\alpha_C-\frac{|AB|^2}{4}}}\quad  \left\{
\begin{array}{ccc}
  <0 & \mbox{ if} & \cos\alpha_C+1<\frac{|AB|^2}{4} \\
  =0 & \mbox{ if} & \cos\alpha_C+1=\frac{|AB|^2}{4}\\
  >0 & \mbox{ if} & \cos\alpha_C+1>\frac{|AB|^2}{4}.
\end{array}
\right.
$$
Since $\cos^{-1}\left(\frac{|AB|^2}{4}-1\right)\leq \pi-\sin^{-1}(|AB|/2)$, on the interval
$$
\sin^{-1}(|AB|/2)<\alpha_C\leq \pi/2\leq \cos^{-1}\left(\frac{|AB|^2}{4}-1\right)\leq \pi-\sin^{-1}(|AB|/2)
$$
the function $v(\alpha_C)$ attains its maximal value at $\cos^{-1}\left(|AB|^2/4-1\right)$ furthermore
$$
v\left(|AB|,\cos^{-1}\left(\frac{|AB|^2}{4}-1\right)\right)=\frac{|AB|^2}{12}\frac{\sqrt{\frac{|AB|^2}{4}\left(1-\frac{|AB|^2}{4}\right)}}{\frac{|AB|^2}{4}} =\frac{|AB|}{6}\sqrt{\left(1-\frac{|AB|^2}{4}\right)}.
$$
$v(|AB|,\alpha_C)$ on the interval $\sin^{-1}(|AB|/2)<\alpha_C \leq \cos^{-1}\left(\frac{|AB|^2}{4}-1\right)$ is a strictly increasing function and on the interval $\cos^{-1}\left(\frac{|AB|^2}{4}-1\right)\leq \pi-\sin^{-1}(|AB|/2)$ it is a decreasing one. This shows that an optimal triangle with the fixed edge length $|AB|$ (which corresponding to a facial tetrahedron with maximal volume) is an isosceles one.
\end{proof}

We also have a formula on the upper bound function $v\left(m',\alpha_C\right)$ using as a parameter the surface area $\tau$ (introduced in Proposition 2).

\begin{prop}
Let the spherical area of the spherical triangle $ABC$ is $\tau$. Let $\alpha_C$ be the greatest central angle of $ABC$ corresponding to $AB$. Then the volume $V$ of the Euclidean pyramid with base $ABC$ and apex $O$ holds the inequality
\begin{equation}
V\leq \frac{1}{3}\tan\frac{\tau}{2}\left(2-\frac{|AB|^2}{4}\left(1+\frac{1}{\left(1+\cos\alpha_C\right)}\right)\right).
\end{equation}
In terms of $\tau $ and $c:=AB$ we have
\begin{equation}
V\leq v(\tau,c):=\frac{1}{6}\sin c\frac{\cos\frac{\tau-c}{2}-\cos\frac{\tau}{2}\cos\frac{c}{2}}{1-\cos\frac{c}{2}\cos\frac{\tau}{2}}.
\end{equation}
Equality holds if and only if $|AC|=|CB|$.
\end{prop}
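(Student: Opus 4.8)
The plan is to prove the sharp statement by showing that, among all spherical triangles with a prescribed spherical area $\tau$ and a prescribed longest edge $AB$ of arc length $c$, the volume $V$ of the facial tetrahedron is maximized \emph{exactly} by the isosceles triangle with $|AC|=|CB|$, and then to evaluate this maximum in closed form. This simultaneously yields the inequality, the equality condition, and both displayed formulas (the $\alpha_C$--form being a re-expression of the extremal value).

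\emph{Reduction to the isosceles case.} Fix the endpoints $A,B$ on the sphere; this fixes $c$ and the unit normal $\hat n$ of the plane $OAB$. Writing $V=\tfrac16|\det[\vec A,\vec B,\vec C]|=\tfrac16\sin c\,|\vec C\cdot\hat n|$, maximizing $V$ amounts to maximizing the height $|\vec C\cdot\hat n|$ of the apex above the plane $OAB$ subject to the area constraint. By Lexell's theorem the locus of apices $C$ giving a fixed area $\tau$ is an arc of a circle; this Lexell circle is invariant under the reflection $\sigma$ in the perpendicular bisector plane $\Pi$ of the segment $AB$, and $\hat n\in\Pi$, so the linear function $C\mapsto\vec C\cdot\hat n$ is $\sigma$-invariant. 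Hence its extrema along the Lexell circle occur at the two $\sigma$-fixed points, i.e. where $\vec C\cdot\vec A=\vec C\cdot\vec B$, which is precisely $|AC|=|CB|$. The maximal height, hence the maximal volume, is attained at the isosceles apex, giving the inequality together with the stated equality condition. (Proposition 2 gives an alternative reduction, but preserving $\tau$ and the greatest central angle $\alpha_C$ rather than $c$, which is what matches the first, $\alpha_C$--dependent form.)

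\emph{Closed form.} For the isosceles triangle place the symmetric apex at $\vec C=\cos\phi\,\vec M+\sin\phi\,\hat n$, where $\vec M$ is the midpoint direction of the arc $AB$; then $V=\tfrac13\sin\tfrac c2\cos\tfrac c2\sin\phi$ and $\cos|AC|=\cos\phi\cos\tfrac c2$. Computing the three spherical angles of the triangle as dihedral angles of the cone $OABC$ expresses the excess $\tau=\gamma+2\beta-\pi$ as an explicit function of $\phi$ and $c$; solving for the height yields $\sin\phi=\dfrac{\sin\frac c2\sin\frac\tau2}{1-\cos\frac c2\cos\frac\tau2}$. Substituting this into $V=\tfrac13\sin\tfrac c2\cos\tfrac c2\sin\phi$ and using the identity $\cos\frac{\tau-c}2-\cos\frac\tau2\cos\frac c2=\sin\frac\tau2\sin\frac c2$ produces exactly the closed form $v(\tau,c)$.

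\emph{The $\alpha_C$--form and the main obstacle.} To recover the first inequality one rewrites the same extremal volume through the greatest central angle: by the Remark following Proposition 2 the isosceles value equals $v(m',\alpha_C)=\tfrac{m'(1-m'^2)}3\sin\alpha_C(1-\cos\alpha_C)$, and the chord relation $|AB|^2=4(1-m'^2)\sin^2\alpha_C$ turns the bracket $2-\frac{|AB|^2}4\bigl(1+\frac1{1+\cos\alpha_C}\bigr)$ into $\cos\alpha_C+\cos^2\alpha_C+m'^2(2-\cos\alpha_C-\cos^2\alpha_C)$; the prefactor $\tan\frac\tau2$ is then supplied by the area constraint. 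The main obstacle is precisely this trigonometric elimination: one must verify, by applying the tangent subtraction and double-angle formulas to the area constraint (conveniently via the half-angle substitution $u=\tan\frac{\alpha_C}2$), that the constraint is equivalent to $\tan\frac\tau2=\dfrac{m'(1-m'^2)\sin\alpha_C(1-\cos\alpha_C)}{\cos\alpha_C+\cos^2\alpha_C+m'^2(2-\cos\alpha_C-\cos^2\alpha_C)}$, and similarly to carry out the computation of $\tau(\phi,c)$ in the closed-form step. The optimization itself is soft (a symmetry argument), so these symbolic identities are the real computational heart of the proof.
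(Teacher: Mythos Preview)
Your proof is correct and, for the closed form $v(\tau,c)$, takes a genuinely different route from the paper. The paper reduces to the isosceles case via Proposition~2 (fixing $\tau$ and the greatest central angle $\alpha_C$, using concavity of the area functional and a Lagrange-multiplier argument), and then reaches $v(\tau,c)$ through the chordal-triangle angle formula for $\cos\alpha_C$ together with Lhuilier's spherical Heron formula, eliminating the equal side $a$ in favour of $\tau$ and $c$. You instead fix the chord $AB$ (hence $c$) and invoke Lexell's theorem together with the reflection symmetry $\sigma$ in the perpendicular bisector plane of $AB$: the volume $\tfrac16\sin c\,(\vec C\cdot\hat n)$ is a $\sigma$-invariant linear functional on the Lexell arc, so its maximum occurs at the unique interior $\sigma$-fixed point, which is the isosceles apex. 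Your parametrization of that apex by $\phi$ and the computation of the excess then give $\sin\phi$ explicitly (equivalently, one may read this off the identity $\tan(\tau/2)=\det[\vec A,\vec B,\vec C]/(1+\vec A\!\cdot\!\vec B+\vec B\!\cdot\!\vec C+\vec C\!\cdot\!\vec A)$), yielding $v(\tau,c)$ without Lhuilier's formula. This is shorter and more geometric; the paper's route, by contrast, naturally produces the $\alpha_C$--form first, since $\alpha_C$ rather than $c$ is what Proposition~2 preserves, and indeed you fall back on that same $\tan(\tau/2)$ manipulation for the first displayed inequality. One subtle consequence worth flagging: because the paper fixes $\alpha_C$ while you fix $c$, the two reductions send a non-isosceles $ABC$ to \emph{different} isosceles comparison triangles, so the symbol $|AB|$ in the first inequality carries a slightly different meaning in the two arguments; both readings give valid upper bounds and they coincide precisely in the equality case $|AC|=|CB|$.
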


\begin{proof}
By Proposition 2 and by the note before this statement we have to investigate the inequality
$$
V\leq\frac{m'(1-m'^2)}{3}\sin \alpha_C(1-\cos \alpha_C)=:v\left(m',\alpha_C\right) \quad \mbox{where} \quad 0<\alpha<\pi
$$
with the condition
$$
\tan\frac{\tau}{2}=\tan\left(\tan^{-1}\left(m'\tan\alpha_C\right)-2\tan^{-1}\left(m'\tan\frac{\alpha_C}{2}\right)\right)=
$$
$$
=\frac{m'\tan\alpha_C-\tan\left(2\tan^{-1}\left(m'\tan\frac{\alpha_C}{2}\right)\right)}{1+ m'\tan\alpha_C\tan\left(2\tan^{-1}\left(m'\tan\frac{\alpha_C}{2}\right)\right)}= \frac{\frac{2m'\tan\frac{\alpha_C}{2}}{1-\tan^2\frac{\alpha_C}{2}}-\frac{2m'\tan\frac{\alpha_C}{2}}{1-m'^2\tan^2\frac{\alpha_C}{2}}}{{1+ \frac{2m'\tan\frac{\alpha_C}{2}}{1-\tan^2\frac{\alpha_C}{2}}\frac{2m'\tan\frac{\alpha_C}{2}}{1-m'^2\tan^2\frac{\alpha_C}{2}}}}=
$$
$$
=\frac{2m'(1-m'^2)\tan^3\frac{\alpha_C}{2}}{{(1-\tan^2\frac{\alpha_C}{2})(1-m'^2\tan^2\frac{\alpha_C}{2}})+ 4m'^2\tan^2\frac{\alpha_C}{2}}=
$$
$$
=\frac{2m'(1-m'^2)\tan\frac{\alpha_C}{2}}{(\cot\frac{\alpha_C}{2}-\tan\frac{\alpha_C}{2})(\cot\frac{\alpha_C}{2}-m'^2\tan\frac{\alpha_C}{2})+ 4m'^2}=
$$
$$
=\frac{2m'(1-m'^2)\sin \alpha_C(1-\cos \alpha_C)}{(1-m'^2)\left(\cos\alpha_C-\sin ^2\alpha_C\right)+(1+m'^2)}=\frac{3v\left(m',\alpha_C\right)}{(1-m'^2)\cos\alpha_C(1+\cos\alpha_C)+2m'^2}.
$$
Since
$$
\sin \alpha_C=\frac{|AB|}{2\sqrt{1-m'^2}}
$$
hence
$$
1-m'^2=\frac{|AB|^2}{4\sin^2\alpha_C}
$$
implying that
$$
3v\left(m',\alpha_C\right)= \tan\frac{\tau}{2}\left(\frac{|AB|^2\cos\alpha_C(1+\cos\alpha_C)}{4\sin^2\alpha_C}+2\left(1-\frac{|AB|^2}{4\sin^2\alpha_C}\right)\right)=
$$
$$
= \tan\frac{\tau}{2}\left(2+\frac{|AB|^2}{4\sin^2\alpha_C}\left(\cos\alpha_C(1+\cos\alpha_C)-2\right)\right)=
\tan\frac{\tau}{2}\left(2-\frac{|AB|^2\left(2+\cos\alpha_C\right)}{4\left(1+\cos\alpha_C\right)}\right).
$$
So
$$
V\leq \frac{1}{3}\tan\frac{\tau}{2}\left(2-\frac{|AB|^2}{4}\left(1+\frac{1}{\left(1+\cos\alpha_C\right)}\right)\right)
$$
as we stated.

Since $\pi-\alpha_C$ is the angle of the chordal triangle (rectilineal triangle) $ABC$ at $C$, thus we can give it as a function of the spherical lengths of the sides of the spherical triangle $ABC$. Thus we have (see eq. (486) in \cite{casey})
$$
\cos \alpha_C=-\frac{1+\cos AB-2\cos AC}{4\sin^2\frac{ AC}{2}}=-\frac{-1+\cos AB+4\sin^2 \frac{AC}{2}}{4\sin^2\frac{ AC}{2}}.
$$
Using the notation $a:=BC=AC$, $c=AB$ we get the formula
$$
V\leq \frac{1}{3}\tan\frac{\tau}{2}\left(2-\sin^2\frac{AB}{2}-2\sin^2 \frac{AC}{2}\right)=
$$
$$
=\frac{1}{3}\tan\frac{\tau}{2}\left(2-\sin^2\frac{c}{2}-2\sin^2 \frac{a}{2}\right).
$$
Finally use the spherical Heron's formula proved first by Lhuilier (see p.88 in \cite{casey}):
$$
\tan \frac{\tau}{4}=\sqrt{\tan\frac{a+b+c}{4}\tan\frac{-a+b+c}{4}\tan\frac{a-b+c}{4}\tan\frac{a+b-c}{4}}.
$$
Since $a=b$ it can be reduced to the form
$$
\tan \frac{\tau}{4}=\tan\frac{c}{4}\sqrt{\tan\frac{2a+c}{4}\tan\frac{2a-c}{4}}= \tan\frac{c}{4}\sqrt{\frac{\sin^2\frac{a}{2}-\sin^2\frac{c}{4}}{1-\sin^2\frac{a}{2}-\sin^2\frac{c}{4}}}.
$$
From this we get that
$$
\sin^2\frac{a}{2}=\frac{ \tan^2\frac{\tau}{4}\cos^2\frac{c}{4}+\tan^2\frac{c}{4}\sin^2\frac{c}{4}}{\tan ^2\frac{\tau}{4}+\tan^2\frac{c}{4}}
$$
and thus the inequality
$$
V\leq \frac{1}{3}\tan\frac{\tau}{2}\left(2-\sin^2\frac{c}{2}-2\frac{ \tan^2\frac{\tau}{4}\cos^2\frac{c}{4}+\tan^2\frac{c}{4}\sin^2\frac{c}{4}}{\tan ^2\frac{\tau}{4}+\tan^2\frac{c}{4}} \right)=
$$
$$
=\frac{1}{3}\tan\frac{\tau}{2}\cos\frac{c}{2}\left(\cos\frac{c}{2}+ \frac{\tan^2\frac{c}{4}-\tan^2\frac{\tau}{4}}{\tan^2\frac{c}{4}+\tan^2\frac{\tau}{4}}\right)= \frac{\sin\frac{\tau}{2}\cos\frac{c}{2}\sin^2\frac{c}{2}}{3\left(1-\cos\frac{c}{2}\cos\frac{\tau}{2}\right)}= \frac{\sin c\sin\frac{\tau}{2}\sin\frac{c}{2}}{6\left(1-\cos\frac{c}{2}\cos\frac{\tau}{2}\right)}=
$$
$$
=\frac{1}{6}\sin c\frac{\cos\frac{\tau-c}{2}-\cos\frac{\tau}{2}\cos\frac{c}{2}}{1-\cos\frac{c}{2}\cos\frac{\tau}{2}}.
$$
\end{proof}

\begin{remark}
In the case when $a=b=c$ the connection between the parameters $c$ and $\tau$ is
$$
\tan\frac{\tau}{4}=\tan\frac{c}{4}\sqrt{\tan3\frac{c}{4}\tan\frac{c}{4}}=\tan^2\frac{c}{4}\sqrt{\frac{3-\tan^2\frac{c}{4}}{1-3\tan^2\frac{c}{4}}}.
$$
To determine the parameter $c$ we introduce the notion $x=\tan^2 (c/4)$ and $\theta=\tan^2 (\tau/4)$. Now we get the equation of order three
$$
0=x^3-3x^2-3\theta x +\theta=(x-1)^3-3x(\theta+1)+(\theta+1),
$$
and if we set $y=x-1$ then the equality
$$
0=y^3-3y(\theta+1)-2(\theta+1).
$$
Using Cardano's formula finally we get that
$$
y=\frac{2\cos\left(\frac{\tau}{12}+\frac{4\pi}{3}\right)}{\cos \frac{\tau}{4}}.
$$
Hence we have
$$
\frac{1-\cos\frac{c}{2}}{1+\cos\frac{c}{2}}=\tan^2 \frac{c}{4}=x=\frac{2\cos\left(\frac{\tau}{12}+\frac{4\pi}{3}\right)+\cos \frac{\tau}{4}}{\cos \frac{\tau}{4}}
$$
implying that
$$
\cos\frac{c}{2}=\frac{-1}{2\cos\frac{\tau+4\pi}{6}}
\quad \mbox{ and } \quad
\sin^2 \frac{c}{2}=\frac{4\cos^2\left(\frac{\tau+4\pi}{6}\right)-1}{4\cos^2\left(\frac{\tau+4\pi}{6}\right)}.
$$
Substituting these values into the formula (3.6) we get the inequality (2.2) showing that Proposition 2 is the generalization of Proposition 1 in the case of $p=3$.
\end{remark}

Assume now that the triangular star-shaped with respect to the origin polyhedron $P$ with $f$ face inscribed in the unit sphere.  Let $c_1,\ldots,c_f$ be the arc-lengths of the edges of the faces $F_1,\ldots ,F_f$ corresponding to their maximal central angles, respectively. Denote by $\tau_i$ the spherical area of the spherical triangle corresponding to the face $F_i$ for all $i$. We note that for a spherical triangle which edges $a,b,c$ hold the inequalities $0<a\leq b\leq c<\Pi/2$, also holds the inequality $\tau \leq c$. In fact, for fixed $\tau$ the least value of the maximal edge length attend at the case of regular triangle. If $c<\pi/2$ then we have
$$
\tan\frac{\tau}{4} =\left(\tan\frac{c}{4}\sqrt{\tan3\frac{c}{4}\tan\frac{c}{4}}\right)= \left(\tan\frac{c}{4}\sqrt{1-\frac{\tan\frac{3c}{4}+\tan\frac{c}{4}}{\tan c }}\right)<\tan\frac{c}{4},
$$
and if $c=\pi/2$ then $\tau=8\pi/4=\pi/2$ proving our statement.

Observe that the function $v(\tau,c)$ is concave in the parameter domain $\mathcal{D}:=\{0<\tau<\pi/2, \tau\leq c<\min\{f(\tau),2\sin^{-1}\sqrt{2/3}\}\}$ with certain concave (in $\tau$) function $f(\tau)$ defined by the zeros of the Hessian; and non-concave in the domain $\mathcal{D'}=\{0< \tau \leq \omega, f(\tau)\leq c\leq 2\sin^{-1}\sqrt{2/3}\}=\{0< \tau\leq c\leq \pi/2\}\setminus D$, where $f(\omega)=2\sin^{-1}\sqrt{2/3}$. (The corresponding calculations can be checked by any symbolic software. In the last section we can see such a computation using Mathematica 10. The precise value of $\omega$ can be found in the last section (at Out[22]) which is approximately $\omega\approx 0.697715$.)

\begin{theorem} Assume that $0<\tau_i<\pi/2$ holds for all $i$. For $i=1,\ldots, f'$ we require the inequalities $0<\tau_i\leq c_i\leq \min\{f(\tau_i),2\sin^{-1}\sqrt{2/3}\}$ and for all $j$ with $j\geq f'$ the inequalities $0< f(\tau_j)\leq c_j \leq 2\sin^{-1}\sqrt{2/3}$, respectively. Let denote $c':=\frac{1}{f'}\sum\limits_{i=1}^{f'}c_i$, $c^\star:=\frac{1}{f-f'}\sum\limits_{i=f'+1}^{f}f(\tau_i)$ and $\tau':=\sum\limits_{i=f'+1}^{f}\tau_i$, respectively. Then we have
\begin{equation}
v(P)\leq \frac{f}{6}\sin \left(\frac{f'c'+(f-f')c^\star}{f}\right)\frac{\cos \left(\frac{4\pi-f'c'-(f-f')c^\star}{2f}\right)-\cos\frac{2\pi}{f}\cos\left(\frac{f'c'+(f-f')c^\star}{2f}\right)} {1-\cos\frac{4\pi}{2f}\cos\left(\frac{f'c'+(f-f')c^\star}{2f}\right)} .
\end{equation}
\end{theorem}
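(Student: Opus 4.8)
The plan is to reduce the global estimate to a single application of Jensen's inequality for the concave function $v(\tau,c)$ of Proposition 3, after first removing the faces that fall in the non-concave region $\mathcal D'$. I would start from $v(P)=\sum_{i=1}^{f} v(F_i)$ and bound each facial tetrahedron by Proposition 3, giving $v(P)\le \sum_{i=1}^{f} v(\tau_i,c_i)$. Because $P$ is star-shaped with respect to $O$ and triangular, the spherical triangles of its faces tile the sphere, so $\sum_{i=1}^{f}\tau_i=4\pi$; this is where the constant $4\pi$ in (3.8) comes from. Setting $\bar c:=\frac{f'c'+(f-f')c^\star}{f}$ and using $\cos\frac{4\pi}{2f}=\cos\frac{2\pi}{f}$, one checks that the right-hand side of (3.8) is exactly $f\cdot v\!\left(\frac{4\pi}{f},\bar c\right)$ with $v$ given by (3.7); hence it suffices to prove $\sum_i v(\tau_i,c_i)\le f\cdot v\!\left(\frac{4\pi}{f},\bar c\right)$.

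For the faces $i\le f'$ the points $(\tau_i,c_i)$ already lie in the concavity domain $\overline{\mathcal D}$, so nothing is needed. For $i>f'$ we have $f(\tau_i)\le c_i$, i.e. $(\tau_i,c_i)\in\mathcal D'$, and here I would use the monotonicity of $v(\tau,\cdot)$: for fixed area the regular triangle minimizes the maximal edge, and elongating the triangle decreases the volume, so $c\mapsto v(\tau,c)$ is decreasing on $[f(\tau),2\sin^{-1}\sqrt{2/3}]$ and therefore $v(\tau_i,c_i)\le v(\tau_i,f(\tau_i))$. After this replacement every summand has the form $v(\tau_i,\hat c_i)$ with $\hat c_i=c_i$ for $i\le f'$ and $\hat c_i=f(\tau_i)$ for $i>f'$, and each point $(\tau_i,\hat c_i)$ lies in $\overline{\mathcal D}$.

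Now Jensen's inequality finishes the argument. Since $f(\tau)$ is concave in $\tau$ and $c\ge \tau$ cuts out a half-plane, $\overline{\mathcal D}$ is the intersection of the hypograph of a concave function with half-planes and a strip, hence convex; and $v$ is concave there by the observation preceding the theorem. Consequently
\[
\sum_{i=1}^{f} v(\tau_i,\hat c_i)\le f\cdot v\!\left(\frac{1}{f}\sum_{i=1}^{f}\tau_i,\ \frac{1}{f}\sum_{i=1}^{f}\hat c_i\right)=f\cdot v\!\left(\frac{4\pi}{f},\,\bar c\right),
\]
because $\frac1f\sum_i\tau_i=\frac{4\pi}{f}$ and $\frac1f\sum_i\hat c_i=\frac{f'c'+(f-f')c^\star}{f}=\bar c$; the averaged point stays in $\overline{\mathcal D}$ automatically by convexity. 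Expanding $v\!\left(\frac{4\pi}{f},\bar c\right)$ via (3.7) yields (3.8) verbatim.

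The main obstacle is the analytic input on which the per-face step rests. Both the joint concavity of $v$ on $\mathcal D$ and the decreasing behaviour of $v(\tau,\cdot)$ beyond the curve $c=f(\tau)$ reduce to sign determinations for the Hessian and for $\partial_c v$ of the cumbersome expression (3.7); these are precisely the computations delegated to the symbolic package, and the genuinely delicate point is identifying the separating curve $f(\tau)$ as the zero locus of the Hessian determinant with $f(\omega)=2\sin^{-1}\sqrt{2/3}$, together with verifying that the resulting region is indeed convex so that the final Jensen step is legitimate.
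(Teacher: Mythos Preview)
Your proposal is correct and follows essentially the same route as the paper: bound each facial tetrahedron by Proposition~3, use the monotonicity of $v(\tau,\cdot)$ on $\mathcal D'$ to pull the points with $c_j\geq f(\tau_j)$ back onto the boundary curve, and then apply Jensen's inequality using the concavity of $v$ on $\mathcal D$ together with $\sum_i\tau_i=4\pi$. The only cosmetic difference is that the paper performs Jensen in two stages---first averaging separately over the indices $i\leq f'$ and $i>f'$, then averaging the two resulting points---whereas you average all $f$ points $(\tau_i,\hat c_i)$ at once; the outcome is identical.
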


\begin{proof}
The volume of $P$ is bounded above by the quantity
$$
v(P)\leq\sum\limits_{i=1}^fv(\tau_i,c_i):=\frac{1}{6}\sum\limits_{i=1}^f\sin c_i\frac{\cos\frac{\tau_i-c_i}{2}-\cos\frac{\tau_i}{2}\cos\frac{c_i}{2}}{1-\cos\frac{c_i}{2}\cos\frac{\tau_i}{2}}.
$$
Using the concavity of the function $v(\tau,c)$ on the domain $\mathcal{D}$ and the fact that the function $v(\tau,\cdot)$ for fixed $\tau$ is monotone decreasing by $c$ on the domain $\mathcal{D'}$, we get the following upper bound for $v(P)$:
$$
v(P)\leq  \frac{f'}{6}v\left(\frac{4\pi-\tau'}{f'},c'\right)+\frac{f-f'}{6}v\left(\frac{\tau'}{f-f'},c^\star\right).
$$
Since for $i=f'+1,\ldots,f$ the points $(\tau_i,f(\tau_i))$ are in the convex domain $D$ then the point $\left(\frac{\tau'}{f-f'},c^\star\right)$ also in $\mathcal{D}$. Applying again the concavity property of the function $v(\tau,c)$, we get the inequality
$$
v(P)\leq \frac{f}{6}v\left(\frac{4\pi}{f},\frac{f'c'+(f-f')c^\star}{f}\right)=
$$
$$
=\frac{f}{6}\sin \left(\frac{f'c'+(f-f')c^\star}{f}\right)\frac{\cos \left(\frac{4\pi-f'c'-(f-f')c^\star}{2f}\right)-\cos\frac{2\pi}{f}\cos\left(\frac{f'c'+(f-f')c^\star}{2f}\right)} {1-\cos\frac{4\pi}{2f}\cos\left(\frac{f'c'+(f-f')c^\star}{2f}\right)},
$$
as we stated.
\end{proof}

\begin{remark}
When $f'=f$ we have the following formula:
\begin{equation}
v(P)\leq \frac{f}{6}\sin c'\frac{\cos\left(\frac{2\pi}{f}-\frac{c'}{2}\right)-\cos\frac{2\pi }{f}\cos\frac{c'}{2}}{1-\cos\frac{c'}{2}\cos\frac{2\pi}{f}},
\end{equation}
where $c'=\frac{1}{f}\sum\limits_{i=1}^{f}c_i$. In this case the upper bound is sharp if all face-triangles are obtuse isosceles ones with the same area and maximal edge lengths.
\end{remark}

The condition of sharpness implies that the unit sphere tiling by the congruent copies of such isosceles spherical triangles which equal sides are less than or equal to the third one. Observe that a polyhedron corresponding to such a tiling could not be convex. This motivates the following problem:
\emph{Give such values $\tau$ and $c$ that the isosceles spherical triangle with area $\tau $ and unique maximal edge length $c$ can be generate a tiling of the unit sphere.}
We note that the regular bodies with triangular faces hold the above property.

\begin{example}
We get a non-trivial example for this question, if we consider a rhombic dodecahedron with the same centroid as of the center of the sphere and we project from the center its vertices to the sphere (see the left figure in Fig.2). (Note that there is no circumscribed sphere about a rhombic dodecahedron hence the projection is necessary.)  We get a tiling of the sphere containing congruent spherical quadrangles. One of these quadrangles has four congruent sides and two diagonals, respectively. The length of the longer diagonal is $c=\pi/2$.

\begin{figure}[ht]
\includegraphics[scale=1]{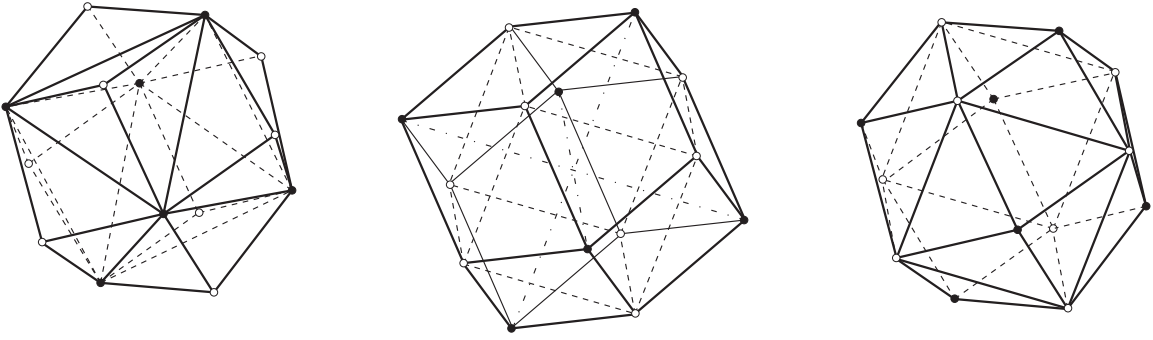}
\caption[]{The star-shaped polyhedron $P$ (on left), the original rhombic dodecahedron and the convex convex hull $Q$ of $P$ (on right).}
\end{figure}

We can dissect these quadrangles at this longer diagonals into  two congruent spherical triangles. Denote by $\mathcal{P}$ the polyhedron defined by those plane triangles as facets which corresponds to these spherical triangles, respectively. The angles and sides have the respective measures $\gamma=2\pi/3,\alpha=\pi/4,\beta=\pi/4$ and $c=\pi/2,a=\sin^{-1}\sqrt{2/3},b=\sin^{-1}\sqrt{2/3}$. Hence the area of this triangle $2\pi/3+\pi/2+\pi/2-\pi=\pi/6=4\pi/24$ as follows from the fact, that the $24$ congruent copies of it, tile the whole sphere. Observe that $\mathcal{P}$ is not convex since the distance of the opposite vertices of two triangles with common base (in Euclidean measure) ($2/\sqrt{3}$)  is less than that of the Euclidean length of the common base ($\sqrt{2}$).
Since we have only one type of triangles for which $f(\tau_1)=f(\pi/6)=f(0,52360)\geq \pi/2=c_1$ we can apply (3.9) with $f=24$, $c'=\pi/2$, hence
$$
v(\mathcal{P})=4\frac{\sqrt{2}\cos\frac{\pi}{6}-\cos\frac{\pi}{12}}{\sqrt{2}-\cos\frac{\pi}{12}}.
$$
This quantity is an upper bound for the volume of such star-shaped polyhedra which inscribed into the unit sphere, have 24 faces with spherical area $\tau_i$ with the assumption $f(\tau_i)\geq \pi/2$ and with maximal edge length $\pi/2$. We get such polyhedra if we change a little-bit the position of those vertices of $P$ which denoted by white circles on Fig.2. (For $\tau$ (by Mathematica 10 see in the last section at In[20]) we get the assumption $\pi/2\geq \tau\geq \tan^{-1}\left(2 \sqrt{5}-3 \sqrt{2}\right)/(10+7 \sqrt{2})\approx \tau=0.427922$.)

Denote by $Q$ the convex hull of $P$ (see the right figure on Fig.2). Then $c_1=2\sin^{-1}\sqrt{1/3}\approx 1,23096<\pi/2<f(\tau_1)$ and we can apply again (3.9). Hence we get that
$$
v(\mathcal{Q})=\frac{8}{3}\frac{\sqrt{6}\cos\left(\frac{\pi}{12}-\sin^{-1}\sqrt{\frac{1}{3}}\right)-2\cos\frac{\pi}{12}}{\sqrt{3}-\cos\frac{\pi}{12}\sqrt{2}}.
$$
$Q$ has maximal volume of the class of such polyhedra which can be gotten from $Q$ by a little change of the position of the vertices denoted by black circles, respectively.
\end{example}

\begin{example}
Assume that $f'=f=12$ and $c=2\sin^{-1}(\sqrt{2/3})$. Then the upper bound is
$$
2\frac{2\sqrt{2}}{3}\frac{\cos\left(\frac{\pi}{6}-\sin^{-1}(\sqrt{2/3})\right)-\frac{1}{\sqrt{3}}\cos\frac{\pi }{6}}{1-\frac{1}{\sqrt{3}}\cos\frac{\pi}{6}}=\frac{8}{3\sqrt{3}},
$$
which is the volume of the cube inscribed into the unit sphere. Hence we got a new proof for that case of Theorem 3.3 of \cite{gho 1} when we restrict our examination to those triangulations in which there is no face-triangle having edge length greater than the edge length of a regular tetrahedron inscribed into the unit sphere.
\end{example}

We now apply our inequality (3.7) to prove the general form of Theorem 3.3 in \cite{gho 1} in which the additional assumption "the tetrahedra are in dual position" has been omitted.

\begin{theorem}
Consider two regular tetrahedron inscribed into the unit sphere. The maximal volume of the convex hull $P$ of the eight vertices is the volume of the cube $C$ inscribed into to unit sphere, so
$$
v(P)\leq v(C)=\frac{8}{3\sqrt{3}}.
$$
\end{theorem}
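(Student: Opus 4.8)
The plan is to adapt the proof of Theorem~1: triangulate $\partial P$, bound each facial tetrahedron by Proposition~3, and then exploit the rigid metric of the two regular simplices. Write $T_1,T_2$ for the tetrahedra. Since both are inscribed in the unit sphere their common centroid is its centre $O$, so $P$ is convex with $O$ in its interior and, since the eight vertices lie on the sphere, all of them are extreme points of $P$. Fix any triangulation of $\partial P$ using these vertices; by Euler's formula such a triangulation has exactly $12$ triangular faces $F_1,\dots,F_{12}$ (if a vertex of $T_2$ coincides with one of $T_1$ there are fewer, a case treated below). Letting $\tau_i$ be the spherical area of the central projection of $F_i$ and $c_i$ the arc-length of its longest edge, we have $\sum_i\tau_i=4\pi$ and, applying Proposition~3 face by face and summing the facial tetrahedra,
\[
v(P)\le\sum_{i=1}^{12}v(\tau_i,c_i).
\]

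The decisive observation is a pigeonhole argument. Each face has three vertices distributed among only two tetrahedra, so two of them, say $u,v$, lie in a common $T_k$; as a regular tetrahedron is a simplex, $u$ and $v$ are joined by one of its edges, and every edge of a regular tetrahedron inscribed in the unit sphere satisfies $\langle u,v\rangle=-\tfrac13$, i.e. has spherical length $c_0:=2\sin^{-1}\sqrt{2/3}=\arccos(-\tfrac13)$. Hence the longest edge of \emph{every} face obeys $c_i\ge c_0$.

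To turn $c_i\ge c_0$ into the cube bound I first simplify Proposition~3. Using $\cos\frac{\tau-c}{2}-\cos\frac{\tau}{2}\cos\frac{c}{2}=\sin\frac{\tau}{2}\sin\frac{c}{2}$ the bound becomes
\[
v(\tau,c)=\frac{\sin\frac{\tau}{2}\,\sin^2\frac{c}{2}\,\cos\frac{c}{2}}{3\bigl(1-\cos\frac{c}{2}\cos\frac{\tau}{2}\bigr)}.
\]
For fixed $\tau$ a direct differentiation shows that $\partial v/\partial c$ has the sign of $-\bigl(1-3\cos^2\frac{c}{2}+2\cos\frac{\tau}{2}\cos^3\frac{c}{2}\bigr)$, and the bracket is positive as soon as $\cos\frac{c}{2}\le 1/\sqrt3$, that is for $c\ge c_0$; thus $v(\tau,\cdot)$ is decreasing on $[c_0,\pi)$ and $v(\tau_i,c_i)\le v(\tau_i,c_0)$ for each $i$ (this extends past $c_0$ the monotonicity that Theorem~1 uses only on $\mathcal{D}'$). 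On the line $c=c_0$ one has $v(\tau,c_0)=\frac{2\sin\frac{\tau}{2}}{9(\sqrt3-\cos\frac{\tau}{2})}$, which is concave in $\tau$ on $(0,\pi)$. Jensen's inequality with $\frac1{12}\sum_i\tau_i=\pi/3$ then gives $\sum_i v(\tau_i,c_0)\le 12\,v(\pi/3,c_0)=\frac{8}{3\sqrt3}$, the final value being exactly the computation of Example~2. Chaining the three estimates yields $v(P)\le v(C)=\frac{8}{3\sqrt3}$; equality forces every $c_i=c_0$ and every $\tau_i=\pi/3$, which pins the configuration down to the cube (the two tetrahedra in dual position).

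The main obstacle is the degenerate bookkeeping suppressed above, not the analytic core. If fewer than eight vertices survive, the same chain gives $v(P)\le n\,v(4\pi/n,c_0)$ with $n<12$, and since $n\mapsto n\,v(4\pi/n,c_0)$ is increasing (a consequence of $\tau\mapsto v(\tau,c_0)/\tau$ being decreasing for a concave function vanishing at $0$), this is still $\le 12\,v(\pi/3,c_0)$. The genuinely delicate point is to guarantee $\tau_i\in(0,\pi)$ so that the concavity of $v(\cdot,c_0)$ applies: when $T_2$ nearly coincides with $T_1$ the eight vertices collapse into four pairs and some face approximates a face of $T_1$, whose central projection has area $\pi$. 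One must check that these areas stay strictly below $\pi$ (they do, since each such face region is genuinely subdivided), or else dispatch these thin configurations by the crude estimate $v(P)\approx v(T_1)=\tfrac13 v(C)$. Making this reduction and the two one-variable sign analyses rigorous on the ranges that actually occur is where the real work lies.
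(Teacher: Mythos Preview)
Your argument is correct and takes a cleaner, genuinely different route from the paper's. The paper does not prove the theorem from scratch: it cites earlier work for the configuration in which the vertices of $T_2$ fall $1,1,1,1$ into the spherical faces of $T_1$, and then treats only the remaining $2,1,1,0$ distribution by a specific dissection into $1+6+5$ triangles and a numerical maximization (the Mathematica call \texttt{NMaximize} at \texttt{In[30]}). Your proof, by contrast, is self-contained and case-free. The pigeonhole step---every triangular face of any boundary triangulation must contain two vertices of the same regular tetrahedron, hence an edge of spherical length exactly $c_0=2\sin^{-1}\sqrt{2/3}$, so $c_i\ge c_0$---is the decisive idea, and it is not isolated in the paper. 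Combined with the monotonicity of $v(\tau,\cdot)$ on $[c_0,\pi)$ and the concavity of $v(\cdot,c_0)$ plus Jensen, it yields the cube bound directly, with the correct equality analysis.

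The gap you flag about $\tau_i\in(0,\pi)$ is real but closes with one line you did not write down. Since $P\supseteq T_1$, every face plane of $P$ lies at distance $m\ge 1/3$ from $O$; the foot of the perpendicular from $O$ is the circumcenter of the face, so in the paper's own notation the spherical area of that face satisfies $\tau=2\pi-2\sum_i\tan^{-1}(m\tan\alpha_i)\le 2\pi-6\tan^{-1}\bigl(m\tan\tfrac{\pi}{3}\bigr)\le 2\pi-6\tan^{-1}\tfrac{1}{\sqrt3}=\pi$ (using the convexity of $\alpha\mapsto\tan^{-1}(m\tan\alpha)$ from Proposition~2; for obtuse faces the other formula there gives $\tau<\pi$ trivially). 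At $\tau=\pi$ your bracket $1-3\cos^2\tfrac{c}{2}+2\cos\tfrac{\tau}{2}\cos^3\tfrac{c}{2}$ reduces to $1-3\cos^2\tfrac{c}{2}\ge 0$ for $c\ge c_0$, so the monotonicity step still holds at the endpoint; and your second-derivative computation shows $v(\cdot,c_0)$ stays concave well beyond $\pi$. With this remark your proof is complete, and it supersedes the paper's argument by avoiding both the case split and the numerical step.
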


\begin{proof}
We have to consider only that case which is not was considered in \cite{gho 1}. Hence we assume that in the spherical regular triangles of the spherical tiling corresponding to the first regular tetrahedron there are $2,1,1,0$ vertices of the second tetrahedron, respectively. The five points  (the three vertices of the first spherical triangle and the two vertices of the second tetrahedron having in this triangle) having in the first closed spherical triangle form a triangular dissection of it into five other spherical triangle. Unfortunately, this dissection contains also such triangles which maximal edge lengths greater than that of the edge length of the regular spherical triangle contains them. On the other hand these triangles belongs to the parameter domain $\mathcal{D}'$ (defined in Theorem 1) because $f(\pi/5)=1.83487<2\sin^{-1}\sqrt{\frac{2}{3}}$. Hence the upper bound function for fixed $\tau$ is locally an decreasing function of $c$. So we can assume that all of these triangles have the same maximal spherical lengths are equal to $2\sin^{-1}\sqrt{\frac{2}{3}}$. Thus we get the following upper bound for the volume:
$$
v(P)\leq v\left(\pi,2\sin^{-1}\sqrt{\frac{2}{3}}\right)+6v\left(\pi/3,2\sin^{-1}\sqrt{\frac{2}{3}}\right)+ \sum\limits_{i=1}^{5}v\left(\tau_i,2\sin^{-1}\sqrt{\frac{2}{3}}\right)=
$$
$$
=\frac{1}{9}+\frac{4}{3\sqrt{3}}+ \frac{2}{9}\sum\limits_{i=1}^{5}\frac{\sin\frac{\tau_i}{2}}{\sqrt{3}-\cos\frac{\tau_i}{2}}
$$
where $0\leq \tau_i$ and $\sum\limits_{i=1}^{5}\tau_i=\pi$. But with this conditions as we can check in Section 5 (at In[30])
$$
\sum\limits_{i=1}^{5}\frac{\sin\frac{\tau_i}{2}}{\sqrt{3}-\cos\frac{\tau_i}{2}}\leq 1.97836<2
$$
implying that
$$
v(P)< \frac{\frac{1}{\sqrt{3}}+4+\frac{4}{\sqrt{3}}}{3\sqrt{3}}<\frac{8}{3\sqrt{3}}=v(C)
$$
as we stated.
\end{proof}

\section{Notes on the general case of the inequality}

In this section we assume that the examined face $F$ of the polyhedron $P$ has $p$ sides. Then we have $p$ points of a circle with center $K$ of the unit sphere which corresponds to a spherical convex polygon and also an rectilineal polygon with the same set of vertices $\{A_1,\ldots, A_p\}$, respectively. We introduce the concept of \emph{central angle} $\alpha_{i,i+1}$ which is the half of the convex angle $A_iKA_{i+1}\measuredangle$ when the line $A_iA_{i+1}$ does not separate $K$ to the other vertices of the polygon, and it is the half of the value $2(\pi-A_iKA_{i+1}\measuredangle)$ in the other case. Then the angle $0<\alpha_{i,i+1}<\pi$ corresponds to the edge $A_iA_{i+1}$. Analogously, we can introduce the angles $\beta_{i,i+1}$ which is the angle of the spherical triangle $A_iA_{i+1}K'$ at the vertex $A_i$ (and $A_{i+1}$), where $K'$ is the central projection of $K$ to the sphere. Now the proof of Prop. 2 can be modified as follows.
If the rectilineal polygon contains $K$ then
$$
a(KA_iA_{i+1})=\Delta(\alpha_{i,i+1},\beta_{i,i+1})=\frac{1}{2}\sin 2\alpha_{i,i+1}\sin ^2 K'OA_i\angle=
$$
$$
=\frac{1}{2}\sin 2\alpha_{i,i+1}\left(1-\cot^2\alpha_{i,i+1}\cot^2\beta_{i,i+1}\right).
$$
On the domain
$$
0\leq \alpha_{i,i+1} \leq \frac{\pi}{2}, \quad 0\leq \beta_{i,i+1} \leq \frac{\pi}{2}, \quad \alpha_{i,i+1}+\beta_{i,i+1} \geq \frac{\pi}{2}
$$
it is a concave function of two variables (see p.267 in \cite{ftl}). Hence
$$
a(A_1\ldots A_p)=\sum\limits_{i=1}^{p-1}\Delta(\alpha_{i,i+1},\beta_{i,i+1})+\Delta(\alpha_{p,1},\beta_{p,1})\leq
$$
$$
\leq (p-1)\Delta\left(\frac{\sum\limits_{i=1}^{p-1}\alpha_{i,i+1}}{p-1}, \frac{\sum\limits_{i=1}^{p-1}\beta_{i,i+1}}{p-1}\right)+\Delta(\alpha_{p,1},\beta_{p,1})=a(A_1'\ldots A_{p}'),
$$
where the value of the right hand side is the area of a polygon $A_{1}'\ldots A_{p}'$ whose edges $A_i'A_{i+1}'$ have equal lengths for $i=1,\ldots,p-1$. (We note that the central projections upon the sphere of the two polygons have the same spherical excess $a(A_1\ldots A_{p})=a(A_1'\ldots A_{p-1}')=2\sum_{i=1}^{p}\beta_{i,i+1}-(p-2)\pi$.)

Compare now the altitudes $m$ and $m'$ of the pyramids based on the two polygons, respectively. The spherical area of the first one is
$$
\tau=2\sum\limits_{i=1}^{p}\beta_{i,i+1}-(p-2)\pi=2\pi+\left(2\sum\limits_{i=1}^p\beta_{i,i+1}-p\pi\right)=2\pi+
$$
$$
+2\left(\sum\limits_{i=1}^{p-1}\tan^{-1}\left(\tan\left(\beta_{i,i+1}-\frac{\pi}{2}\right)\right)+ \tan^{-1}\left(\tan\left(\beta_{p,1}-\frac{\pi}{2}\right)\right)\right)=
$$
$$
=2\pi-2\left(\sum\limits_{i=1}^{p-1}\tan^{-1}\left(m\tan\alpha_{i,i+1}\right)+\tan^{-1}\left(m\tan\alpha_{p,1}\right)\right).
$$
We get that
$$
\tau\leq 2\pi-2\left((p-1)\tan^{-1}\left(m\tan\frac{\sum\limits_{i=1}^{p-1}\alpha_{i,i+1}}{p-1}\right)+\tan^{-1}\left(m\tan\alpha_{p,1}\right)\right).
$$
On the other hand by $m'$ we have
$$
\tau=2\pi-2\left((p-1)\tan^{-1}\left(m'\tan\frac{\sum\limits_{i=1}^{p-1}\alpha_{i,i+1}}{p-1}\right)+\tan^{-1}\left(m'\tan\alpha_{p,1}\right)\right)
$$
implying that $m'\geq m$.

We have to investigate also the second case, when $A_pA_1$ separates $K$ and the other vertices to each other. Then $\sum_{i=1}^{p-1}\alpha_{i,i+1}=\pi-\alpha_{p,1}=\widetilde{\alpha_{p,1}}<\pi/2$ and we have
$$
\tau=2\left(\tan^{-1}\left(m\tan\left(\sum\limits_{i=1}^{p-1}\alpha_{i,i+1}\right)\right)- \sum\limits_{i=1}^{p-1}\tan^{-1}\left(m\tan\alpha_{i,i+1}\right)\right).
$$
On the other hand
$$
a(A_1\ldots A_p)=\frac{1-m^2}{2}\left(\sum\limits_{i=1}^{p-1}\sin 2\alpha_{i,i+1}-\sin 2\widetilde{\alpha_{p,1}}\right)
$$
and the volume in question is
\begin{equation}
v(\alpha_{1,2},\ldots,\alpha_{p-1,p})=\frac{m(1-m^2)}{6}\left(\sum\limits_{i=1}^{p-1}\sin2\alpha_{i,i+1}-\sin 2\sum\limits_{i=1}^{p-1}\alpha_{i,i+1}\right).
\end{equation}
We consider the maximum of $v(\alpha_{1,2},\ldots,\alpha_{p-1,p})$ under the conditions $0\leq \alpha_{i,i+1}\leq \pi/2$,
$$
0=-\frac{\tau}{2}+\left(\tan^{-1}\left(m\tan\left(\sum\limits_{i=1}^{p-1}\alpha_{i,i+1}\right)\right)- \sum\limits_{i=1}^{p-1}\tan^{-1}\left(m\tan\alpha_{i,i+1}\right)\right),
$$
and
$$
0=\sum\limits_{i=1}^{p-1}\alpha_{i,i+1}-\mathrm{const}.
$$
with respect to the unknown values $\alpha_{i,i+1}$ and $m$. As in the proof of Proposition 2, using Lagrange's method we get $p-1$ equations of form
$$
\mu=\frac{m(1-m^2)}{6}\left(\cos 2\alpha_{i,i+1}-\cos 2\sum\limits_{i=1}^{p-1}\alpha_{i,i+1}\right)+\frac{\lambda_i m(1-m^2)\left(\tan^2\sum\limits_{i=1}^{p-1}\alpha_{i,i+1}-\tan^2\alpha_{i,i+1}\right) }{\left(1+m^2\tan^2\sum\limits_{i=1}^{p-1}\alpha_{i,i+1}\right)\left(1+m^2\tan^2\alpha_{i,i+1}\right)}.
$$
These leads to the equations
$$
\frac{\left(1+\tan^2\alpha_{1,2}\right)}{\left(1+m^2\tan^2\alpha_{1,2}\right)}=\ldots=\frac{\left(1+\tan^2\alpha_{p-1,p}\right) }{\left(1+m^2\tan^2\alpha_{p-1,p}\right)}
$$
which cannot be satisfied unless $\alpha_{1,2}=\ldots =\alpha_{p-1,p}$. Hence if the corresponding sides of the polygon are not equals then the polygon is not a local extremum of our problem. On the other hand, by compactness, it has at least one local maximum proving the following statement:

\begin{prop}
Let $A_1\ldots A_p$ be a convex polygon inscribed into a circle of the unit sphere. Then there is another convex polygon $A_1'\ldots A_p'$ inscribed also into a circle of the unit sphere with the following properties:
\begin{itemize}
\item $A_1'A_{2}'=\ldots =A_{p-1}'A_{p}'$;
\item the greatest central angles and also the spherical areas of the two polygons are equal to each other, respectively;
\item the volume of the facial pyramid with base $A_1'\ldots A_{p}'$ and apex $O$ is greater than or equal to the volume of the pyramid with base $A_1\ldots A_p$ and apex $O$.
\end{itemize}
\end{prop}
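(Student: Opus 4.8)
The plan is to run, for general $p$, the same two–case argument that proved Proposition 2, the only novelty being that now $p-1$ elementary triangles must be equalised simultaneously instead of two. Write $\alpha_{i,i+1}$ for the central angle and $\beta_{i,i+1}$ for the base angle of the elementary spherical triangle $K'A_iA_{i+1}$, let $m$ be the apex height of the pyramid, and label the vertices so that $A_pA_1$ carries the \emph{greatest} central angle. I would split according to the position of the circumcentre $K$: either the rectilineal polygon contains $K$, or the longest edge $A_pA_1$ separates $K$ from the other vertices. These are the only two possibilities for a convex inscribed polygon, and they correspond exactly to the two parts of the proof of Proposition 2.

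In the first case I would use the concavity of $\Delta(\alpha,\beta)=\tfrac12\sin 2\alpha\,(1-\cot^2\alpha\cot^2\beta)$ on the domain $0\le\alpha,\beta\le\pi/2$, $\alpha+\beta\ge\pi/2$ to replace the first $p-1$ summands of $a(A_1\dots A_p)=\sum\Delta(\alpha_{i,i+1},\beta_{i,i+1})$ by $(p-1)\Delta(\bar\alpha,\bar\beta)$, where $\bar\alpha,\bar\beta$ are the averages of $\alpha_{1,2},\dots,\alpha_{p-1,p}$ and of $\beta_{1,2},\dots,\beta_{p-1,p}$. By Jensen this does not decrease the planar area and, because the spherical excess is $\tau=2\sum\beta_{i,i+1}-(p-2)\pi$, it preserves $\tau$ while leaving the extremal edge $A_pA_1$ untouched; this yields the candidate polygon with $A_1'A_2'=\dots=A_{p-1}'A_p'$. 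To compare volumes I would then write $\tau=2\pi-2\bigl(\sum_{i=1}^{p-1}\tan^{-1}(m\tan\alpha_{i,i+1})+\tan^{-1}(m\tan\alpha_{p,1})\bigr)$ and use the convexity of $t\mapsto\tan^{-1}(m\tan t)$ to obtain $\tau\le 2\pi-2\bigl((p-1)\tan^{-1}(m\tan\bar\alpha)+\tan^{-1}(m\tan\alpha_{p,1})\bigr)$; equating this with the same expression in the new height $m'$ and using monotonicity in the height gives $m'\ge m$, from which the volume increase follows as in Proposition 2.

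In the second case, where $A_pA_1$ separates $K$, the averaging is not available and I would instead maximise
$$
v=\frac{m(1-m^2)}{6}\Bigl(\sum_{i=1}^{p-1}\sin 2\alpha_{i,i+1}-\sin 2\!\sum_{i=1}^{p-1}\alpha_{i,i+1}\Bigr)
$$
subject to the fixed spherical area and to $\sum_{i=1}^{p-1}\alpha_{i,i+1}=\widetilde{\alpha_{p,1}}=\mathrm{const}$, which fixes the greatest central angle. Lagrange's method produces $p-1$ stationarity equations; after the same simplification used at $p=3$ (the identity converting $\tan^2\sum\alpha-\tan^2\alpha_{i,i+1}$ into $\cos 2\alpha_{i,i+1}-\cos 2\sum\alpha$) these collapse to the chain
$$
\frac{1+\tan^2\alpha_{1,2}}{1+m^2\tan^2\alpha_{1,2}}=\cdots=\frac{1+\tan^2\alpha_{p-1,p}}{1+m^2\tan^2\alpha_{p-1,p}}.
$$
Since $m^2<1$ the function $t\mapsto\frac{1+t}{1+m^2t}$ is strictly monotone, so the chain forces $\alpha_{1,2}=\cdots=\alpha_{p-1,p}$; thus no configuration with unequal sides is stationary, and compactness of the constraint set supplies a maximum, necessarily equilateral in the first $p-1$ edges.

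The step I expect to be the main obstacle is the admissibility bookkeeping in the first case: I must verify that the equalised data $(\bar\alpha,\bar\beta)$ indeed come from a genuine convex polygon inscribed in a circle of the sphere, that the averaged angles stay inside the concavity domain $\alpha+\beta\ge\pi/2$, and---crucially---that the edge $A_pA_1$ still carries the greatest central angle after the replacement, so that the hypotheses are reproduced for $A_1'\dots A_p'$. The same care is needed to turn ``$m'\ge m$ together with larger area'' into a genuine volume inequality, since the factor $m(1-m^2)$ is not monotone in $m$; once these points are checked the two cases assemble into the stated proposition exactly as for $p=3$.
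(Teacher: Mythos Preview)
Your proposal follows essentially the same two--case argument as the paper's own proof in Section~4: Jensen on the concave function $\Delta(\alpha,\beta)$ together with convexity of $\tan^{-1}(m\tan\alpha)$ to get $m'\ge m$ in the first case, and the identical Lagrange--multiplier computation collapsing to the chain $\frac{1+\tan^2\alpha_{1,2}}{1+m^2\tan^2\alpha_{1,2}}=\cdots=\frac{1+\tan^2\alpha_{p-1,p}}{1+m^2\tan^2\alpha_{p-1,p}}$ in the second. The admissibility and volume--monotonicity concerns you flag in your final paragraph are not addressed in the paper either; it proceeds exactly as you outline and concludes by compactness, so your plan matches the paper's proof essentially verbatim.
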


We can rewrite the formulas (3.2), (3.4), and (3.5) of Section 3 to the corresponding formulas on the upper bound functions of a facial pyramid based on a $p$-gon, respectively. We get that
\begin{equation}
v(m',\alpha_{p,1})=\frac{m'(1-m'^2)}{6}\left((p-1)\sin \frac{2\alpha_{p,1}}{p-1}-\sin 2\alpha_{p,1}\right).
\end{equation}
Since
$$
\sin \alpha_{p,1}=\frac{|A_1A_p|}{2\sqrt{1-m'^2}}, \mbox{ hence } m'(1-m'^2)=\frac{|A_1A_p|^2\sqrt{\sin ^2\alpha_{p,1}-\frac{|A_1A_p|^2}{4}}}{4\sin ^3\alpha_{p,1}}
$$
we have that
\begin{equation}
v(|A_1A_p|,\alpha_{p,1}):=\frac{|A_1A_p|^2\sqrt{\sin^2\alpha_{p,1}-\frac{|A_1A_p|^2}{4}}}{24}\frac{\left((p-1)\sin \frac{2\alpha_{p,1}}{p-1}-\sin 2\alpha_{p,1}\right)}{\sin^3\alpha_{p,1}},
\end{equation}
and using the equality $|A_1A_p|=2\sin \frac{A_1A_p}{2}$ it gets the form
\begin{equation}
v(A_1A_p, \alpha_{p,1}):= \frac{\sin^2\frac{A_1A_p}{2}\sqrt{\sin^2\alpha_{p,1}-\sin^2\frac{A_1A_p}{2}}}{6}\frac{\left((p-1)\sin \frac{2\alpha_{p,1}}{p-1}-\sin 2\alpha_{p,1}\right)}{\sin^3\alpha_{p,1}}.
\end{equation}
Finally, we note that the formulas (3.6) and (3.7) cannot be reproduced in an obvious way. The reason is that a complicated trigonometric formula
$$
\tan\frac{\tau}{2}=\tan\left(\tan^{-1}\left(m'\tan\alpha_{p,1}\right)- (p-1)\tan^{-1}\left(m'\tan\frac{\alpha_{p,1}}{p-1}\right)\right),
$$
$$
=\frac{m'\tan\alpha_{p,1}-\tan\left((p-1)\tan^{-1}\left(m'\tan\frac{\alpha_{p,1}}{p-1}\right)\right)}{1+ m'\tan\alpha_{p,1}\tan\left((p-1)\tan^{-1}\left(m'\tan\frac{\alpha_{p,1}}{p-1}\right)\right)}
$$
connects the parameters $\tau,m'$ and $\frac{\alpha_{p,1}}{p-1}$ excluding the possibility to get explicit formulas similar to (3.6) and (3.7), respectively.

\section{Numerical and symbolic computations with Mathematica 10}

Present section contains those command lines (in Mathematica 10), which support our statements on the examined functions. The source file with the results can be found on the web \cite{gho 3}. 

\noindent {\bf In[1]:} \quad
\(D[1/6\text{Sin}[c](\text{Cos}[(x-c)/2]-\text{Cos}[x/2]\text{Cos}[c/2])/(1-\text{Cos}[x/2]\text{Cos}[c/2]), x,c]\)

\noindent {\bf In[2]:} \quad 
\(\text{Simplify}[\%]\)

\noindent {\bf In[3]:} \quad 
\(D[1/6\text{Sin}[c](\text{Cos}[(x-c)/2]-\text{Cos}[x/2]\text{Cos}[c/2])/(1-\text{Cos}[x/2]\text{Cos}[c/2]), \{x,2\}]
\)

\noindent {\bf In[4]:} \quad 
\(\text{Simplify}[\%]\)

\noindent {\bf In[5]:} \quad 
\(D[1/6\text{Sin}[c](\text{Cos}[(x-c)/2]-\text{Cos}[x/2]\text{Cos}[c/2])/(1-\text{Cos}[x/2]\text{Cos}[c/2]), \{c,2\}]
\)

\noindent {\bf In[6]:} \quad 
\(\text{Simplify}[\%]
\)

\noindent {\bf In[7]:} \quad 
\(
D[1/6\text{Sin}[c](\text{Cos}[(x-c)/2]-\text{Cos}[x/2]\text{Cos}[c/2])/(1-\text{Cos}[x/2]\text{Cos}[c/2]), \{x,2\}]D[1/6\text{Sin}[c] \\
\text{Cos}[(x-c)/2]-\text{Cos}[x/2]\text{Cos}[c/2])/(1-\text{Cos}[x/2]\text{Cos}[c/2]),
\{c,2\}]-D[1/6\text{Sin}[c](\text{Cos}[(x-c)/2]- \\
\text{Cos}[x/2]\text{Cos}[c/2])/(1-\text{Cos}[x/2]\text{Cos}[c/2]), x,c]{}^{\wedge}2
\)

\noindent {\bf In[8]:} \quad 
\(
\text{Simplify}[\%]
\)

\noindent {\bf In[9]:} \quad 
\(
\text{Plot3D}[1/6\text{Sin}[c](\text{Cos}[(x-c)/2]-\text{Cos}[x/2]\text{Cos}[c/2])/(1-\text{Cos}[x/2]\text{Cos}[c/2]), \{x,0,\text{Pi}/2\},\\ \{c,x,2\text{ArcSin}[\text{Sqrt}[2/3]]\}]
\)

\noindent {\bf In[10]:} \quad 
\(
D[1/6\text{Sin}[c](\text{Cos}[(x-c)/2]-\text{Cos}[x/2]\text{Cos}[c/2])/(1-\text{Cos}[x/2]\text{Cos}[c/2]), c]
\)

\noindent {\bf In[11]:} \quad 
\(
\text{Simplify}[\%]
\)

\noindent {\bf In[12]:} \quad 
\(
\text{Plot3D}[\text{Out}[11], \{x,0,\text{Pi}/2\}, \{c,x,2\text{ArcSin}[\text{Sqrt}[2/3]]\}]
\)

\noindent {\bf In[13]:} \quad  
\(
\text{RegionPlot}[\text{Out}[11]\text{$>$=}0 ,\{x,0,\text{Pi}/2\},\{c,0,2\text{ArcSin}[\text{Sqrt}[2/3]]\}]
\)

\noindent {\bf In[14]:} \quad 
\(
\text{Plot3D}[\text{Out}[8], \{x,0,\text{Pi}/2\}, \{c,x,2\text{ArcSin}[\text{Sqrt}[2/3]]\}]
\)

\noindent {\bf In[15]:} \quad 
\(
\text{RegionPlot}[\text{Out}[8]\text{$>$=}0 ,\{x,0,\text{Pi}/2\},\{c,0,2\text{ArcSin}[\text{Sqrt}[2/3]]\}]
\)

\noindent {\bf In[16]:} \quad 
\(
\text{Reduce}[ \text{Out}[8]\text{==}0 \&\& x\text{==}\text{Pi}/2 \&\& 0<c<2\text{ArcSin}[\text{Sqrt}[2/3]], \{x,c\}]
\)

\noindent {\bf In[17]:} \quad 
\(
N[\%]
\)

\noindent {\bf In[18]: } \quad 
\(
\text{Solve}[z{}^{\wedge}4-24z{}^{\wedge}3+78z{}^{\wedge}2-24z+1\text{==}0]
\)

\noindent {\bf In[19]: } \quad 
\(N[\%]\)

\noindent {\bf In[20]: } \quad 
\(
\text{Reduce}[ \text{Out}[8]\text{==}0 \&\& c\text{==}\text{Pi}/2 \&\& $0<x<\text{Pi}/2$, \{x,c\}]
\)

\noindent {\bf In[21]:} \quad
\( N[\%] \)

\noindent {\bf In[22]:} \quad 
\(
\text{Reduce}[ \text{Out}[8]\text{==}0 \&\& c\text{==}2\text{ArcSin}[\text{Sqrt}[2/3]] \&\& 0<x<\text{Pi}/2, \{x,c\}]
\)

\noindent {\bf In[23]:} \quad 
\(
N[\%]
\)

\noindent {\bf In[24]:} \quad 
\(
\text{Reduce}[ \text{Out}[8]\text{==}0 \&\& x\text{==}-2\text{Pi}/3+2\text{ArcSin}[\text{Sqrt}[14]/4] \&\& x<c<2\text{ArcSin}[\text{Sqrt}[2/3]],
\{x,c\}]
\)

\noindent {\bf In[25]:} \quad
\(
N[\%]
\)

\noindent {\bf In[26]:} \quad 
\(
\text{Reduce}[ \text{Out}[8]\text{==}0 \&\& x\text{==}-4\text{ArcSin}[\text{Sqrt}[14]/4]+5\text{Pi}/3 \&\& x<c<\text{Pi}/2, \{x,c\}]
\)

\noindent {\bf In[27]:} \quad 
\(
N[\%]
\)

\noindent {\bf In[28]:} \quad 
\(
\text{Reduce}[ \text{Out}[8]\text{==}0 \&\& x\text{==}\text{Pi}/5 \&\& 0<c<2\text{ArcSin}[\text{Sqrt}[2/3]], \{x,c\}]
\)

\noindent {\bf In[29]:} \quad 
\(
N[\%]
\)

\noindent {\bf In[30]:} \quad 
\(
\text{NMaximize}[\{\text{Sin}[x]/(\text{Sqrt}[3]-\text{Cos}[x])+\text{Sin}[y]/(\text{Sqrt}[3]-\text{Cos}[y])+\text{Sin}[z]/(\text{Sqrt}[3]-\text{Cos}[z])+\text{Sin}[u]/(\text{Sqrt}[3]-\text{Cos}[u])+\text{Sin}[v]/(\text{Sqrt}[3]-\text{Cos}[v]),
x+y+z+v+u\text{$<$=}\text{Pi}/2, 0<x, 0<y, 0<z, 0<u, 0<v\}, \{x,y,z,u,v\}]
\)

\end{document}